\newtheorem{thm}{Theorem}[section]
\newtheorem{prop}[thm]{Proposition}
\newtheorem{lem}[thm]{Lemma}
\newtheorem{cor}[thm]{Corollary}
\theoremstyle{definition}
\newtheorem{example}[thm]{Example}
\newtheorem{remark}[thm]{Remark}
\newtheorem{definition}[thm]{Definition}
\font\black=cmbx10 \font\sblack=cmbx7 \font\ssblack=cmbx5 \font\blackital=cmmib10  \skewchar\blackital='177
\font\sblackital=cmmib7 \skewchar\sblackital='177 \font\ssblackital=cmmib5 \skewchar\ssblackital='177
\font\sanss=cmss12 \font\ssanss=cmss8 scaled 900 \font\sssanss=cmss8 scaled 600 \font\blackboard=msbm10
\font\sblackboard=msbm7 \font\ssblackboard=msbm5 \font\caligr=eusm10 \font\scaligr=eusm7 \font\sscaligr=eusm5
\font\bsymb=cmsy10 scaled\magstep2
\def\all#1{\setbox0=\hbox{\lower1.5pt\hbox{\bsymb
       \char"38}}\setbox1=\hbox{$_{#1}$} \box0\lower2pt\box1\;}
\def\exi#1{\setbox0=\hbox{\lower1.5pt\hbox{\bsymb \char"39}}
       \setbox1=\hbox{$_{#1}$} \box0\lower2pt\box1\;}
\def\tx#1{{\fam0\relax#1}}
\def\sss#1{{\fam\ssfam\relax#1}}
\def\pmb#1{\setbox0\hbox{${#1}$} \copy0 \kern-\wd0 \kern.2pt \box0}
\def\pmbb#1{\setbox0\hbox{${#1}$} \copy0 \kern-\wd0
      \kern.2pt \copy0 \kern-\wd0 \kern.2pt \box0}
\def\pmbbb#1{\setbox0\hbox{${#1}$} \copy0 \kern-\wd0
      \kern.2pt \copy0 \kern-\wd0 \kern.2pt
    \copy0 \kern-\wd0 \kern.2pt \box0}
\def\pmxb#1{\setbox0\hbox{${#1}$} \copy0 \kern-\wd0
      \kern.2pt \copy0 \kern-\wd0 \kern.2pt
      \copy0 \kern-\wd0 \kern.2pt \copy0 \kern-\wd0 \kern.2pt \box0}
\def\pmxbb#1{\setbox0\hbox{${#1}$} \copy0 \kern-\wd0 \kern.2pt
      \copy0 \kern-\wd0 \kern.2pt
      \copy0 \kern-\wd0 \kern.2pt \copy0 \kern-\wd0 \kern.2pt
      \copy0 \kern-\wd0 \kern.2pt \box0}
\mathchardef\za="710B  
\mathchardef\zb="710C  
\mathchardef\zg="710D  
\mathchardef\zd="710E  
\mathchardef\zve="710F 
\mathchardef\zz="7110  
\mathchardef\zh="7111  
\mathchardef\zvy="7112 
\mathchardef\zi="7113  
\mathchardef\zk="7114  
\mathchardef\zl="7115  
\mathchardef\zm="7116  
\mathchardef\zn="7117  
\mathchardef\zx="7118  
\mathchardef\zp="7119  
\mathchardef\zr="711A  
\mathchardef\zs="711B  
\mathchardef\zt="711C  
\mathchardef\zu="711D  
\mathchardef\zvf="711E 
\mathchardef\zq="711F  
\mathchardef\zc="7120  
\mathchardef\zw="7121  
\mathchardef\ze="7122  
\mathchardef\zy="7123  
\mathchardef\zf="7124  
\mathchardef\zvr="7125 
\mathchardef\zvs="7126 
\mathchardef\zf="7127  
\mathchardef\zG="7000  
\mathchardef\zD="7001  
\mathchardef\zY="7002  
\mathchardef\zL="7003  
\mathchardef\zX="7004  
\mathchardef\zP="7005  
\mathchardef\zS="7006  
\mathchardef\zU="7007  
\mathchardef\zF="7008  
\mathchardef\zW="700A  
\newcommand{\be}{\begin{equation}}
\newcommand{\ee}{\end{equation}}
\newcommand{\ra}{\rightarrow}
\newcommand{\bea}{\begin{eqnarray}}
\newcommand{\eea}{\end{eqnarray}}
\newcommand{\beas}{\begin{eqnarray*}}
\newcommand{\eeas}{\end{eqnarray*}}
\def\*{{\textstyle *}}
\newcommand{\R}{{\mathbb R}}
\newcommand{\C}{{\mathbb C}}
\newcommand{\we}{\wedge}
\newcommand{\nn}{\nonumber}
\newcommand{\ot}{\otimes}
\newcommand{\pa}{\partial}
\newcommand{\ti}{\times}
\newcommand{\Li}{{\cal L}}
\newcommand{\ka}{\mathbb{K}}
\newcommand{\ad}{{\rm ad}}
\newcommand{\X}{{\cal X}}
\newcommand{\Ll}{{\pounds}}
\def\ran{\rangle}
\def\Hom{\sss{Hom}}
\def\Der{\sss{Der}}
\def\End{\sss{End}}
\def\Lin{\sss{Lin}}
\def\op{\oplus}
\def\cA{{\cal A}}
\def\cD{{\cal D}}
\def\cE{{\cal E}}
\def\cL{{\cal L}}
\def\cT{{\cal T}}
\def\cX{{\cal X}}
\def\wh{\widehat}
\def\Sec{\sss{Sec}}
\def\cim{{C^\infty(M)}}
\def\la{\langle}
\def\ran{\rangle}
\def\sD{{\sss D}}
\def\sT{{\sss T}}
\def\xd{\tx{d}}
\def\rk{\operatorname{rk}}
\def\bcE{{\bar\cE^0}}
\newcommand{\ope}[1]{\!\!\mathop{\rm ~#1}\nolimits}
\begin{document}
\title{{\bf The Supergeometry of Loday Algebroids}}
\date{}
\author{Janusz Grabowski\thanks{The research of J. Grabowski was
supported by the Polish Ministry of Science and Higher Education
under the grant N N201 416839.}, David Khudaverdyan, and Norbert
Poncin\thanks{The research of N. Poncin was supported by Grant
GeoAlgPhys 2011-2013 awarded by the University of Luxembourg.}}

\maketitle

\begin{abstract} A {new} concept of {\it Loday algebroid} (and its pure algebraic version -- {\it Loday pseudoalgebra}) is
proposed and discussed in comparison with other similar structures
present in the literature. The structure of a Loday pseudoalgebra
and its natural reduction to a Lie pseudoalgebra is studied.
Further, Loday algebroids are interpreted as homological vector
fields on a `supercommutative manifold' {associated with a shuffle product} and the corresponding
Cartan calculus is introduced. Several examples, including Courant
algebroids, Grassmann-Dorfman and twisted Courant-Dorfman
brackets, as well as algebroids induced by Nambu-Poisson
structures, are given.
\end{abstract}

\vspace{2mm} \noindent {\bf MSC 2000}: 53D17, 58A50, 17A32,
17B66\medskip

\noindent{\bf Keywords}: Algebroid, pseudoalgebra, Loday algebra,
Courant bracket, supercommutative manifold, homological vector
field, Cartan calculus

\section{Introduction}

The concept of {\it Dirac structure}, proposed by Dorfman \cite{Do} in the Hamiltonian framework of integrable
evolution equations and defined in \cite{Co} as an isotropic subbundle of the Whitney sum $\cT M=\sT
M\oplus_M\sT^\ast M$ of the tangent and the cotangent bundles and satisfying some additional conditions,
provides a geometric setting for Dirac's theory of constrained mechanical systems. To formulate the
integrability condition defining the Dirac structure, Courant \cite{Co} introduced a natural
skew-symmetric bracket operation on sections of \ $\cT M$. The Courant bracket does not satisfy the Leibniz
rule with respect to multiplication by functions nor the Jacobi identity. These defects disappear upon
restriction to a Dirac subbundle because of the isotropy condition. Particular cases of Dirac structures are
graphs of closed 2-forms and  Poisson bivector fields on the manifold $M$.

The nature of the Courant bracket itself remained unclear until several years later when it was observed by
Liu, Weinstein and Xu \cite{LWX} that $\cT M$ endowed with the Courant bracket plays the role of a `double'
object, in the sense of Drinfeld \cite{Dr}, for a pair of Lie algebroids (see \cite{Mac}) over $M$. Let us
recall that, in complete analogy with Drinfeld's Lie bialgebras, in the category of Lie algebroids there also
exist `bi-objects', Lie bialgebroids, introduced by Mackenzie and Xu \cite{MX} as linearizations of Poisson
groupoids. On the other hand, every Lie bialgebra has a double which is a Lie algebra. This is not so for
general Lie bialgebroids. Instead, Liu, Weinstein and Xu \cite{LWX} showed that the double of a Lie
bialgebroid is a more complicated structure they call a {\it Courant algebroid}, $\cT M$ with the Courant
bracket being a special case.

There is also another way of viewing Courant algebroids as a generalization of Lie algebroids. This requires a
change in the definition of the Courant bracket and considering an analog of the non-antisymmetric Dorfman
bracket \cite{Do}, so that the traditional Courant bracket becomes the skew-symmetrization of the new one
\cite{Roy}. This change replaces one of the defects with another one: a version of the Jacobi identity is
satisfied, while the bracket is no longer skew-symmetric. Such algebraic structures have been introduced by
Loday \cite{Lo} under the name {\it Leibniz algebras}, but they are nowadays also often called {\it Loday
algebras}. Loday algebras, like their skew-symmetric counterparts -- Lie algebras -- determine certain
cohomological complexes, defined on tensor algebras instead of Grassmann algebras. {Canonical examples of Loday algebras arise often as {\em derived brackets} introduced by Kosmann-Schwarzbach \cite{K-S,YKS}.}

Since Loday brackets, like the Courant-Dorfman bracket, appear naturally in Geometry and Physics in the form
of `algebroid brackets', i.e. brackets on sections of vector bundles, there were several attempts to formalize
the concept of {\it Loday} (or {\it Leibniz}) {\em algebroid} (see e.g. \cite{Ba,BV,G1,GM,ILMP,Ha,HM,KS,MM,SX,Wa}).
We prefer the terminology {\em Loday algebroid} to distinguish them from other {\em general algebroid}
brackets with both anchors (see \cite{GU}), called sometimes {\em Leibniz algebroids} or {\em Leibniz
brackets} and used recently in Physics, for instance, in the context of nonholonomic constraints
\cite{GG,GG1,GGU,GLMM,OPB}. Note also that a Loday algebroid is the horizontal categorification of a Loday algebra; vertical categorification would lead to Loday $n$-algebras, which are tightly related to truncated Loday infinity algebras, see \cite{AP10}, \cite{KMP11}.

The concepts of Loday algebroid we found in the literature do not seem to be exactly appropriate. The notion
in \cite{G1}, which assumes the existence of both anchor maps, is too strong and admits no real new examples,
except for Lie algebroids and bundles of Loday algebras. The concept introduced in \cite{SX} requires a
pseudo-Riemannian metric on the bundle, so it is too strong as well and does not reduce to a Loday algebra
when we consider a bundle over a single point, while the other concepts \cite{Ha,HM,ILMP,KS,MM,Wa}, assuming
only the existence of a left anchor, do not put any differentiability requirements for the first variable, so
that they are not geometric and too weak (see Example \ref{e1}). Only in \cite{Ba} one considers some Leibniz algebroids with local brackets.

The aim of this work is to propose a modified concept of Loday algebroid in terms of an operation on
sections of a vector bundle, as well as in terms of a homological vector field of a supercommutative manifold.
We put some minimal requirements that a proper concept of Loday algebroid should satisfy. Namely, the
definition of Loday algebroid, understood as a certain operation on sections of a vector bundle $E$,
\begin{itemize}
\item should reduce to the definition of Loday algebra in the case when $E$ is just a vector space;

\item should contain the Courant-Dorfman bracket as a particular example;

\item should be as close to the definition of Lie algebroid as possible.
\end{itemize}

We propose a definition satisfying all these requirements and
including all main known examples of Loday brackets with geometric
origins. Moreover, we can interpret our Loday algebroid structures
as homological vector fields on a supercommutative manifold; this
opens, like in the case of Lie algebroids, new horizons for a
geometric understanding of these objects and of their possible
`higher generalizations' \cite{BP12}.
{This supercommutative manifold is associated with a superalgebra of differential operators{, whose multiplication is a supercommutative shuffle product}.

Note that we cannot work with the supermanifold {$\Pi E$} like in the case of a Lie algebroid on $E$, since the Loday coboundary operator rises the degree of {a differential operator,} even for Lie algebroid brackets. For instance, the Loday differential associated with the standard bracket of vector fields produces the Levi-Civita connection out of a Riemannian metric \cite{Ldd1}. However, the Levi-Civita connection $\nabla_XZ$ is no longer a tensor, as it is of the first-order with respect to $Z$. Therefore, instead of the Grassmann algebra $\Sec(\we E^*)$ of `differential forms', which are zero-degree skew-symmetric multidifferential operators on $E$, we are forced to consider, not just the tensor algebra of sections of $\oplus_{k=0}^\infty(E^*)^{\otimes k}$, but the algebra $\cD^\bullet(E)$ spanned by all multidifferential operators
$$D:\Sec(E)\times\cdots\times\Sec(E)\to C^\infty(M)\,.$$
However, to retain the supergeometric flavor, we can reduce ourselves to a smaller subspace $\sD^\bullet(E)$ of $\cD^\bullet(E)$, which is a subalgebra with respect to the canonical supercommutative shuffle product and is closed under the Loday coboundary operators associated with the Loday algebroids we introduce. This interesting observation deserves further investigations that we postpone to a next paper.

We should also make clear that, although the algebraic structures in question have their roots in Physics (see the papers on Geometric Mechanics mentioned above), we do not propose in this paper new applications to Physics, but focus on finding a proper framework unifying all these structures. Our work seems to be technically complicated enough and applications to Mechanics will be the subject of a separate work.
}

\medskip
The paper is organized as follows. We first recall, in Section 2, needed results on differential
operators and derivative endomorphisms. In Section 3 we investigate, under the name of pseudoalgebras,
algebraic counterparts of algebroids requiring varying differentiability properties for the two entries of the
bracket. The results of Section 4 show that we should relax our traditional understanding of the right anchor
map. A concept of Loday algebroid satisfying all the above requirements is proposed in Definition \ref{d1}
and further detailed in Theorem \ref{LodAld}. In Section 5 we describe a number of new Loday algebroids
containing main canonical examples of Loday brackets on sections of a vector bundle. A natural reduction a Loday pseudoalgebra to a Lie pseudoalgebra is studied in Section 6. For the standard Courant bracket it corresponds to its reduction to the Lie bracket of vector fields.  We then define Loday
algebroid cohomology, Section 7, and interpret in Section 8 our Loday algebroid structures in terms of
homological vector fields of the graded ringed space given by the shuffle multiplication of multidifferential
operators, see Theorem \ref{GeoIntKLA2}. We introduce also the corresponding Cartan calculus.

\section{Differential operators and derivative endomorphisms}
All geometric objects, like manifolds, bundles, maps, sections, etc. will be smooth throughout
this paper.

\begin{definition} A {\it Lie algebroid} structure on a vector bundle $\zt:E\ra M$ is a Lie algebra bracket $[\cdot,\cdot]$ on the real vector space $\cE=\Sec(E)$ of sections of $E$ which satisfies the following compatibility condition related to the $\cA=C^\infty(M)$-module structure in $\cE$:
\be\label{anchor}
\forall \ X,Y\in\cE\ \forall f\in\cA\quad [X,fY]-f[X,Y]=\zr(X)(f)Y\,,
\ee
for some vector bundle morphism $\zr:E\ra\sT M$ covering the identity on $M$ and called the {\it anchor map}.
Here, $\zr(X)=\zr\circ X$ is the vector field on $M$ associated {\it via} $\zr$ with the section $X$.
\end{definition}
Note that the bundle morphism $\zr$ is uniquely determined by the bracket of the Lie algebroid.
What differs a general Lie algebroid bracket from just a Lie module bracket on the $C^\infty(M)$-module
$\Sec(E)$  of sections of $E$ is the fact that it is not $\cA$-bilinear but a certain first-order
bidifferential operator: the adjoint operator $\ad_X=[X,\cdot]$ is a {\it derivative endomorphism}, i.e., the
{\it Leibniz rule}
\be\label{lr}\ad_X(fY)=f\ad_X(Y)+\wh{X}(f)Y
\ee
is satisfied for each $Y\in\cE$ and $f\in\cA$, where $\wh{X}=\zr(X)$ is the vector field on $M$ assigned to
$X$, the {\it anchor} of $X$. Moreover, the assignment $X\mapsto\wh{X}$ is a differential operator of order 0,
as it comes from a bundle map $\zr:E\mapsto \sT M$.

Derivative endomorphisms (also called {\it quasi-derivations}), like differential operators in general, can be
defined for any module $\cE$ over an associative commutative ring $\cA$. Also an extension to superalgebras is
straightforward. These natural ideas go back to Grothendieck and Vinogradov \cite{Vi}.
On the module $\cE$ we have namely a distinguished family $\cA_\cE=\{ f_\cE:f\in\cA\}$ of linear operators
provided by the module structure: $f_\cE(Y)=fY$.
\begin{definition} Let $\cE_i$, $i=1,2$, be modules over the same ring $\cA$. We say that an additive operator $D:\cE_1\ra\cE_2$ is a {\it differential operator of order 0}, if it intertwines $f_{\cE_1}$ with $f_{\cE_2}$, i.e.
\be\label{cm}\zd(f)(D):=D\circ f_{\cE_1}-f_{\cE_2}\circ D\,, \ee
vanishes for all $f\in\cA$. Inductively, we say that $D$ is a {\it differential operator of order} $\le k+1$, if the
commutators (\ref{cm}) are differential operators of order $\le k$. In other words, $D$ is a differential
operator of order $\le k$ if and only if \be\label{do} \forall \ f_1,\dots,f_{k+1}\in\cA\quad
\zd({f_1})\zd({f_2})\cdots\zd({f_{k+1}})(D)=0\,. \ee The corresponding set of differential operators of order
$\le k$ will be denoted by $\cD_k(\cE_1;\cE_2)$ (shortly, $\cD_k(\cE)$, if $\cE_1=\cE_2=\cE$) and the set of
differential operators of arbitrary order (filtered by $\left(\cD_k(\cE_1;\cE_2)\right)_{k=0}^\infty$) by
$\cD(\cE_1;\cE_2)$ (resp., $\cD(\cE)$). We will say that $D$ {\it is of order $k$} if it is of order $\le k$
and not of order $\le k-1$.
\end{definition}
In particular, $\cD_0(\cE_1;\cE_2)=\Hom_\cA(\cE_1;\cE_2)$ is made up by module homomorphisms. Note that in the
case when $\cE_i=\Sec(E_i)$ is the module of sections of a vector bundle $E_i$, $i=1,2$, the concept of
differential operators defined above coincides with the standard understanding. As this will be our standard
geometric model, to reduce algebraic complexity we will assume that $\cA$ is an associative commutative
algebra with unity $1$ over a field $\ka$ of characteristic 0 and all the $\cA$-modules are faithful. In this
case, $\cD(\cE_1;\cE_2)$ is a (canonically filtered) vector space over $\ka$ and, since we work with fields of
characteristic 0, condition (\ref{do}) is equivalent to a simpler condition (see \cite{G}) \be\label{do1}
\forall \ f\in\cA\quad \zd({f})^{k+1}(D)=0\,. \ee If $\cE_1=\cE_2=\cE$, then $\zd(f)(D)=[D,f_\cE]_c$, where
$[\cdot,\cdot]_c$ is the commutator bracket, and elements of $\cA_\cE$ are particular 0-order operators.
Therefore, we can canonically identify $\cA$ with the subspace $\cA_\cE$ in $\cD_0(\cE)$ and use it to
distinguish a particular set of first-order differential operators on $\cE$ as follows.
\begin{definition} {\it Derivative endomorphisms} (or {\it quasi-derivations}) $D:\cE\ra\cE$ are particular first-order differential operators distinguished by the condition
\be\label{anchor1}\forall \ f\in\cA\quad \exists\ \wh{f}\in\cA\quad [D,f_\cE]_c=\wh{f}_\cE\,.\ee
\end{definition}
Since the commutator bracket satisfies the Jacobi identity, one can immediately conclude that
$\wh{f}_\cE=\wh{D}(f)_\cE$ which holds for some derivation $\wh{D}\in\Der(\cA)$ and an arbitrary $f\in\cA$
\cite{G1}. Derivative endomorphisms form a submodule $\Der(\cE)$ in the $\cA$-module $\End_{\mathbb{K}}(\cE)$
of $\mathbb{K}$-linear endomorphisms of $\cE$ which is simultaneously a Lie subalgebra over $\ka$ with respect
to the commutator bracket. The linear map,
$$\Der(\cE)\ni D\mapsto\wh{D}\in\Der(\cA)\,,$$
called the {\it universal anchor map}, is a differential operator of order 0, $\wh{fD}=f\wh{D}$. The Jacobi
identity for the commutator bracket easily implies (see \cite[Theorem 2]{G1})
\be\label{anchor2} [D_1,D_2]_c^{\wh{}}=[\wh{D_1},\wh{D_2}]_c\,.
\ee
It is worth remarking (see \cite{G1}) that also $\cD(\cE)$ is a Lie subalgebra in $\End_\ka(\cE)$, as
\be\label{qP}[\cD_k(\cE),\cD_l(\cE)]_c\subset\cD_{k+l-1}(\cE)\,,
\ee
and an associative subalgebra, as
\be\label{qP1}\cD_k(\cE)\circ\cD_l(\cE)\subset\cD_{k+l}(\cE)\,,
\ee
that makes $\cD(\cE)$ into a canonical example of a {\it quantum Poisson algebra} in the terminology of
\cite{GP}.

It was pointed out in \cite{KM} that the concept of derivative endomorphism can be traced back to N.~Jacobson
\cite{Ja1,Ja2} as a special case of his {\it pseudo-linear endomorphism}. It has appeared also in \cite{Ne}
under the name {\it module derivation} and was used to define linear connections in the algebraic setting. In
the geometric setting of Lie algebroids it has been studied in \cite{Mac} under the name {\it covariant
differential operator}. For more detailed history and recent development we refer to \cite{KM}.

Algebraic operations in differential geometry have usually a local character in order to be treatable with
geometric methods. On the pure algebraic level we should work with differential (or multidifferential)
operations, as tells us the celebrated Peetre Theorem \cite{Pe,Pe1}. The algebraic concept of a
multidifferential operator is obvious. For a $\ka$-multilinear operator $D:\cE_1\ti\cdots\ti\cE_p\ra\cE$ and
each $i=1,\dots,p$, we say that  $D$ is a {\it differential operator of order $\le k$ with respect to the
$i$th variable}, if, for all $y_j\in\cE_j$, $j\ne i$,
$$D(y_1,\dots,y_{i-1},\,\cdot\,,y_{i+1},\dots, y_p):\cE_i\ra\cE$$
is a differential operator of order $\le k$. In other words,
\be\label{mdo}
\forall \ f\in\cA\quad \zd_i({f})^{k+1}(D)=0\,,
\ee
where
\be
\zd_i(f)D(y_1,\dots,y_p)=D(y_1,\dots,fy_i,\dots,y_p)-fD(y_1,\dots,y_p)\,.
\ee
Note that the operations $\zd_i(f)$ and $\zd_j(g)$ commute. We say that the operator $D$ {\it is a
multidifferential operator of order $\le n$}, if it is of order $\le n$ with respect to each variable
separately. This means that, fixing any $p-1$ arguments, we get a differential operator of order $\le n$. A
similar, but stronger, definition is the following
\begin{definition} We say that a multilinear operator $D:\cE_1\ti\cdots\ti\cE_p\ra\cE$ is a {\it multidifferential operator of total order $\le k$}, if
\be\label{mdo1}
\forall \ f_1,\dots,f_{k+1}\in\cA\ \forall\ i_1,\dots,i_{k+1}=1,\dots,p\quad
\left[\zd_{i_1}({f_1})\zd_{i_2}({f_2})\cdots\zd_{i_{k+1}}({f_{k+1}})(D)=0\right]\,.
\ee
\end{definition}
\noindent Of course, a multidifferential operator of total order $\le k$ is a multidifferential operator of
order $\le k$. It is also easy to see that a $p$-linear differential operator of order $\le k$ is a
multidifferential operator of total order $\le pk$. In particular, the Lie bracket of vector fields (in fact,
any Lie algebroid bracket) is a bilinear differential operator of total order $\le 1$.

\section{Pseudoalgebras}
Let us start this section with recalling that Loday, while studying relations between Hochschild and cyclic
homology in the search for obstructions to the periodicity of algebraic K-theory, discovered that one can skip
the skew-symmetry assumption in the definition of Lie algebra, still having a possibility to define an
appropriate (co)homology (see \cite{Lo1,LP} and \cite[Chapter 10.6]{Lo}). His Jacobi identity for such
structures was formally the same as the classical Jacobi identity in the form
\be\label{JI} [x,[y,z]]=[[x,y],z]+[y,[x,z]]. \ee This time,
however, this is no longer equivalent to \be\label{JI1} [[x,y],z]=[[x,z],y]+[x,[y,z]], \ee nor to
\be\label{JI2} [x,[y,z]]+[y,[z,x]]+[z,[x,y]]=0, \ee since we have no skew-symmetry. Loday called such
structures {\it Leibniz algebras}, but to avoid collision with another concept of {\it Leibniz brackets} in
the literature, we shall call them {\it Loday algebras}. This is in accordance with the terminology of {\cite{K-S}},
where analogous structures in the graded case are defined. Note that the identities (\ref{JI}) and (\ref{JI1})
have an advantage over the identity (\ref{JI2}) obtained by cyclic permutations, since they describe the
algebraic facts that the left-regular (resp., right-regular) actions are left (resp., right) derivations. This
was the reason to name the structure `Leibniz algebra'.

Of course, there is no particular reason not to define Loday algebras by means of (\ref{JI1}) instead of
(\ref{JI}) (and in fact, it was the original definition by Loday), but this is not a substantial difference,
as both categories are equivalent via transposition of arguments. We will use the form (\ref{JI}) of the
Jacobi identity.

Our aim is to find a proper generalization of the concept of Loday algebra in a way similar to that in which
Lie algebroids generalize Lie algebras. If one thinks about a generalization of a concept of Lie algebroid as
operations on sections of a vector bundle including operations (brackets) which are non-antisymmetric or which
do not satisfy the Jacobi identity, and are not just $\cA$-bilinear, then it is reasonable, on one hand, to
assume differentiability properties of the bracket as close to the corresponding properties of Lie algebroids
as possible and, on the other hand, including all known natural examples of such brackets. This is not an
easy task, since, as we will see soon, some natural possibilities provide only few new examples.

To present a list of these possibilities, we propose the following definitions serving in the pure algebraic
setting.
\begin{definition}\label{def} Let $\cE$ be a faithful module over an associative commutative algebra $\cA$ over a field $\ka$ of characteristic 0. A a $\ka$-bilinear bracket $B=[\cdot,\cdot] :\cE\ti\cE\ra\cE$ on the module $\cE$
\begin{enumerate}
\item is called a {\it {faint} pseudoalgebra bracket}, if  $B$ is a bidifferential operator; \item is called
a {\it weak pseudoalgebra bracket}, if $B$ is a bidifferential operator of degree $\le 1$; \item is called a
{\it quasi pseudoalgebra bracket}, if $B$ is a bidifferential operator of total degree $\le 1$; \item is
called a {\it pseudoalgebra bracket}, if $B$ is a bidifferential operator of total degree $\le 1$ and the {\it
adjoint map} $\ad_X=[X,\cdot]:\cE\ra\cE$ is a derivative endomorphism for each $X\in\cE$; \item is called a
{\it QD-pseudoalgebra bracket}, if the {\it adjoint maps} $\ad_X,\ad_X^r:\cE\ra\cE$, \be\label{ad}
\ad_X=[X,\cdot]\,,\quad \ad_X^r=[\cdot,X]\,\quad(X\in\cE)\,,\ee associated with $B$ are derivative
endomorphisms (quasi-derivations); \item is called a {\it strong pseudoalgebra bracket}, if $B$ is a
bidifferential operator of total degree $\le 1$ and the {\it adjoint maps} $\ad_X,\ad_X^r:\cE\ra\cE$,
\be\label{ad1} \ad_X=[X,\cdot]\,,\quad \ad_X^r=[\cdot,X]\,\quad(X\in\cE)\,, \ee are derivative endomorphisms.
\end{enumerate}
We call the module $\cE$ equipped with such a bracket, respectively, a {\it {faint} pseudoalgebra}, {\it weak
pseudoalgebra} etc. If the bracket is symmetric (skew-symmetric), we speak about {faint}, weak, etc., {\it
symmetric} ({\it skew}) {\it pseudoalgebras}. If the bracket satisfies the Jacobi identity (\ref{JI}), we
speak about local, weak, etc.,  {\it Loday pseudoalgebras}, and if the bracket is a Lie algebra bracket, we
speak about local, weak, etc., {\it Lie pseudoalgebras}. If $\cE$ is the $\cA=C^\infty(M)$ module of sections
of a vector bundle $\zt:E\ra M$, we refer to the above pseudoalgebra structures as to {\it algebroids}.
\end{definition}
\begin{thm} If \ $[\cdot,\cdot]$ is a pseudoalgebra bracket, then the map
$$\zr:\cE\ra \Der(\cA)\,,\quad \zr(X)=\wh{\ad_X}\,,$$
called the {\em anchor map}, is $\cA$-linear, $\zr(fX)=f\zr(X)$, and
\be\label{aanchor}[X,fY]=f[X,Y]+\zr(X)(f)Y
\ee
for all $X,Y\in\cE$, $f\in\cA$. Moreover, if \ $[\cdot,\cdot]$ satisfies additionally the Jacobi identity,
i.e., we deal with a Loday pseudoalgebra, then the anchor map is a homomorphism into the commutator bracket,
\be\label{anhom}
\zr\left([X,Y]\right)=[\zr(X),\zr(Y)]_c\,.
\ee
\end{thm}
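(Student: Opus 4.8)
The plan is to extract the anchor from the Leibniz-rule structure of the adjoint operators and then verify the two asserted identities by straightforward manipulations with the commutator bracket. First I would recall that, by hypothesis, a pseudoalgebra bracket is a bidifferential operator of total degree $\le 1$ whose adjoint maps $\ad_X=[X,\cdot]$ are derivative endomorphisms; hence for each $X\in\cE$ there is a derivation $\wh{\ad_X}\in\Der(\cA)$ with $[\ad_X,f_\cE]_c=\wh{\ad_X}(f)_\cE$, which is exactly the identity (\ref{aanchor}) once we set $\zr(X)=\wh{\ad_X}$ and unwind $[\ad_X,f_\cE]_c(Y)=[X,fY]-f[X,Y]$. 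So (\ref{aanchor}) is essentially the definition of $\zr$ rewritten, and the only real content in the first part is $\cA$-linearity of $\zr$.

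For the $\cA$-linearity $\zr(fX)=f\zr(X)$, the key input is that the bracket has total degree $\le 1$: this means $\zd_1(g)\zd_2(h)(B)=0$ for all $g,h\in\cA$, i.e. the mixed second-order term vanishes. Concretely I would compute $\ad_{fX}(hY)=[fX,hY]$ and compare it with $f\,\ad_X(hY)$; the difference, after using (\ref{aanchor}) in the second slot, isolates $\zr(fX)(h)Y-(f\zr(X))(h)Y$ together with a term coming from $[fX,Y]-f[X,Y]$, which is the degree-one part in the first variable. Because $B$ has total degree $\le 1$, applying $\zd_1(f)$ and then $\zd_2(h)$ kills the whole thing, forcing $\zr(fX)(h)=f\,\zr(X)(h)$ for all $h$; since $\cE$ is faithful this gives $\zr(fX)=f\zr(X)$. (Equivalently, one invokes that $D\mapsto\wh D$ is a degree-$0$ differential operator on $\Der(\cE)$, as recorded after Definition 2.5, together with the fact that total degree $\le 1$ forces $X\mapsto\ad_X$ itself to be degree $\le 1$ in $X$, so that $X\mapsto\wh{\ad_X}$ is degree $\le 1$ with vanishing symbol, hence $\cA$-linear.)

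For the homomorphism property (\ref{anhom}), I would use the Jacobi identity (\ref{JI}) in the form $\ad_{[X,Y]}=[\ad_X,\ad_Y]_c$: indeed, (\ref{JI}) says precisely $[X,[Y,Z]]=[[X,Y],Z]+[Y,[X,Z]]$, which rearranges to $\ad_X\ad_Y(Z)-\ad_Y\ad_X(Z)=\ad_{[X,Y]}(Z)$ for all $Z$. Now apply the universal anchor map $D\mapsto\wh D$ on $\Der(\cE)$ and use identity (\ref{anchor2}), namely $[D_1,D_2]_c^{\wh{}}=[\wh{D_1},\wh{D_2}]_c$. This yields $\wh{\ad_{[X,Y]}}=[\,\wh{\ad_X},\wh{\ad_Y}\,]_c$, which is exactly $\zr([X,Y])=[\zr(X),\zr(Y)]_c$ by definition of $\zr$.

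The main obstacle is the first part: one must be careful that the derivation $\wh{\ad_X}$ is well defined and that the total-degree-$\le1$ hypothesis genuinely delivers $\cA$-linearity of $X\mapsto\wh{\ad_X}$ rather than merely some weaker differentiability. The clean way around this is to note that total degree $\le 1$ implies $\ad_{fX}(Y)-f\,\ad_X(Y)$ is $\cA$-linear in $Y$ (it is $\zd_1(f)(B)$ applied with the second slot, which has degree $0$ in that slot), so $\ad_{fX}-f\,\ad_X\in\Hom_\cA(\cE)=\cD_0(\cE)$; since a $0$-order operator has vanishing universal anchor, $\wh{\ad_{fX}}=\wh{f\,\ad_X}=f\,\wh{\ad_X}$, using $\wh{fD}=f\wh D$. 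Once $\cA$-linearity of $\zr$ is in hand, the rest is a short formal computation. Everything else — the equality (\ref{aanchor}) and the homomorphism identity (\ref{anhom}) — is a direct transcription using (\ref{anchor2}) and the Jacobi identity, with no further subtlety.
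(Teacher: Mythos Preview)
Your proposal is correct and follows essentially the same route as the paper. The paper computes directly that $(\zd_1(f)\zd_2(g)B)(X,Y)=\bigl(\zr(fX)-f\zr(X)\bigr)(g)\,Y$ and invokes faithfulness of $\cE$, which is exactly the content of your ``clean'' argument that $\ad_{fX}-f\,\ad_X\in\cD_0(\cE)$ has vanishing universal anchor; for the homomorphism identity the paper simply says ``direct implication of the Jacobi identity combined with (\ref{aanchor})'', and your use of $\ad_{[X,Y]}=[\ad_X,\ad_Y]_c$ together with (\ref{anchor2}) is precisely the computation the paper leaves implicit.
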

\begin{proof}
Since the bracket $B$ is a bidifferential operator of total degree $\le 1$, we have $\zd_1(f)\zd_2(g)B=0$ for
all $f,g\in\cA$. On the other hand, as easily seen,
\be\label{QD}(\zd_1(f)\zd_2(g)B)(X,Y)=\left(\zr(fX)-f\zr(X)\right)(g)Y\,,\ee
and the module is faithful, it follows $\zr(fX)=f\zr(X)$. The identity (\ref{anhom}) is a direct implication
of the Jacobi identity combined with (\ref{aanchor}).

\end{proof}
\begin{thm} If \ $[\cdot,\cdot]$ is a QD-pseudoalgebra bracket, then it is a weak pseudoalgebra bracket and admits two {\it anchor maps}
$$\zr,\zr^r:\cE\ra \Der(\cA)\,,\quad \zr(X)=\wh{\ad_X}\,,\ \zr^r=-\wh{\ad^r}\,,$$
for which we have
\be\label{anchors}[X,fY]=f[X,Y]+\zr(X)(f)Y\,,\quad [fX,Y]=f[X,Y]-\zr^r(X)(f)Y\,,
\ee
for all $X,Y\in\cE$, $f\in\cA$. If the bracket is skew-symmetric, then both anchors coincide, and if the
bracket is a strong QD-pseudoalgebra bracket, they are $\cA$-linear. Moreover, if \ $[\cdot,\cdot]$ satisfies
additionally the Jacobi identity, i.e., we deal with a Loday QD-pseudoalgebra, then, for all $X,Y\in\cE$,
\be\label{anhomn}
\zr\left([X,Y]\right)=[\zr(X),\zr(Y)]_c\,.
\ee
\end{thm}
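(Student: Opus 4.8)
The plan is to mirror the structure of the previous theorem's proof, but now working from the two one-sided quasi-derivation hypotheses rather than from a total-degree-$\le 1$ assumption. First I would show that a QD-pseudoalgebra bracket is automatically a weak pseudoalgebra bracket. Since $\ad_X$ is a quasi-derivation for every $X$, it is in particular a first-order differential operator in the second variable, so $\zd_2(f)^2 B=0$ for all $f\in\cA$; symmetrically, $\ad_X^r=[\cdot,X]$ being a quasi-derivation gives $\zd_1(f)^2 B=0$. Hence $B$ is bidifferential of degree $\le 1$, which is exactly the definition of a weak pseudoalgebra bracket. The two anchor maps then come directly from the universal anchor construction: because $\ad_X$ is a quasi-derivation, $\widehat{\ad_X}=:\zr(X)\in\Der(\cA)$ exists and satisfies $[\ad_X,f_\cE]_c=\zr(X)(f)_\cE$, which unwinds to $[X,fY]-f[X,Y]=\zr(X)(f)Y$; applying the same reasoning to $\ad_X^r$ and inserting the sign in $\zr^r:=-\widehat{\ad^r}$ yields $[fX,Y]-f[X,Y]=-\zr^r(X)(f)Y$. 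This establishes (\ref{anchors}).

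Next I would treat the two refinements. If $B$ is skew-symmetric, then $\ad_X^r=-\ad_X$, so $\widehat{\ad_X^r}=-\widehat{\ad_X}$ and therefore $\zr^r(X)=-\widehat{\ad_X^r}=\widehat{\ad_X}=\zr(X)$: the anchors coincide. If instead $B$ is a \emph{strong} QD-pseudoalgebra bracket, then $B$ is bidifferential of total degree $\le 1$, so $\zd_1(f)\zd_2(g)B=0$; computing this mixed commutator as in (\ref{QD}) gives $\bigl(\zr(fX)-f\zr(X)\bigr)(g)Y=0$ for all $g$, and faithfulness forces $\zr(fX)=f\zr(X)$. A symmetric computation with the roles of the variables exchanged (using $\zd_2(g)\zd_1(f)B=0$, which equals $\zd_1(f)\zd_2(g)B$) gives $\zr^r(fX)=f\zr^r(X)$, so both anchors are $\cA$-linear.

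Finally, for the Jacobi identity part, I would fix $X,Y,Z\in\cE$ and $f\in\cA$ and evaluate the Loday identity (\ref{JI}), namely $[X,[Y,fZ]]=[[X,Y],fZ]+[Y,[X,fZ]]$, by expanding each bracket with the left Leibniz rule from (\ref{anchors}). On the left one gets a term $f[X,[Y,Z]]$ together with $\zr(X)\bigl(\zr(Y)(f)\bigr)Z+\zr(Y)(f)[X,Z]+\zr(X)(f)[Y,Z]$ (the last from differentiating the inner anchor coefficient once more); on the right one collects $f\bigl([[X,Y],Z]+[Y,[X,Z]]\bigr)$ plus $\zr([X,Y])(f)Z+\zr(X)(f)[Y,Z]+\zr(Y)\bigl(\zr(X)(f)\bigr)Z+\zr(Y)(f)[X,Z]$. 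The $f$-linear parts cancel by (\ref{JI}) applied to $X,Y,Z$, the $[X,Z]$- and $[Y,Z]$-terms cancel pairwise, and what remains is $\bigl(\zr(X)\zr(Y)-\zr(Y)\zr(X)-\zr([X,Y])\bigr)(f)\,Z=0$; faithfulness of $\cE$ then yields (\ref{anhomn}). The only place demanding care — and the step I expect to be the main bookkeeping obstacle — is this last expansion: one must differentiate the anchor coefficients to second order correctly and keep track of which Leibniz rule (left-anchor) is legitimately available, since \emph{a priori} the anchors need not be $\cA$-linear here, so terms like $\zr(fX)$ must never be silently replaced by $f\zr(X)$ in this part of the argument.
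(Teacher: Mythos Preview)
Your argument is correct and follows essentially the same route as the paper: you deduce the weak-pseudoalgebra property from each $\ad_X$, $\ad_X^r$ being a first-order operator (the paper writes this out as $(\zd_2(f)\zd_2(g)B)(X,Y)=\zr(X)(g)fY-f\zr(X)(g)Y=0$), and you obtain $\cA$-linearity of the anchors in the strong case from $\zd_1(f)\zd_2(g)B=0$, exactly as in the paper's equation (\ref{QD1}). Your detailed expansion of the Jacobi identity against $fZ$ is precisely what the paper means by ``follows analogously to the previous theorem,'' so you have simply supplied the bookkeeping the paper omits.
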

\begin{proof}
Similarly as above,
$$(\zd_2(f)\zd_2(g)B)(X,Y)=\zr(X)(g)fY-f\zr(X)(g)Y=0\,,$$
so $B$ is a first-order differential operator with respect to the second argument. The same can be done for
the first argument.

Next, as for any QD-pseudoalgebra bracket $B$ we have, analogously to (\ref{QD}),
\be\label{QD1}(\zd_1(f)\zd_2(g)B)(X,Y)=\left(\zr(fX)-f\zr(X)\right)(g)Y=\left(\zr^r(gY)-g\zr^r(Y)\right)(f)X\,,\ee
both anchor maps are $\cA$-linear if and only if $D$ is of total order $\le 1$. The rest follows analogously
to the previous theorem.

\end{proof}

The next observation is that quasi pseudoalgebra structures on an $\cA$-module $\cE$ have certain analogs of
anchor maps, namely $\cA$-module homomorphisms $b=b^r,b^l:\cE\ra\Der(\cA)\otimes_\cA\End(\cE)$. For every
$X\in\cE$ we will view $b(X)$ as an $\cA$-module homomorphism $b(X):\zW^1\ot_\cA\cE\ra\cE$, where $\zW^1$
is the $\cA$-submodule of $\Hom_\cA(\Der(\cA);\cA)$ generated by $\xd\cA=\{\xd f:f\in\cA\}$ and $\la\xd
f,D\ran=D(f)$. Elements of $\,\Der(\cA)\otimes_\cA\End(\cE)$ act on elements of $\zW^1\otimes_\cA \cE$ in the
obvious way: $(V\otimes\Phi)(\zw\otimes X)=\la V,\zw\ran\Phi(X)$.

\begin{thm}\label{T1} A $\,\ka$-bilinear bracket $B=[\cdot,\cdot]$ on an $\cA$-module $\cE$ defines a quasi pseudoalgebra structure if
and only if there are $\cA$-module homomorphisms \be\label{Lqa} b^r,b^l:\cE\ra\Der(\cA)\otimes_\cA\End(\cE)\,,
\ee
called {\em generalized anchor maps, right and left}, such that, for all $X,Y\in\cE$ and all $f\in\cA$,
\be\label{aLqa} [X,fY]=f[X,Y]+b^l(X)(\xd f\otimes Y)\,,\quad [fX,Y]=f[X,Y]-b^r(Y)(\xd f\otimes X)\,. \ee The
generalized anchor maps are actual anchor maps if they take values in $\Der(\cA)\otimes_\cA\{\ope{Id}_\cE\}$.
\end{thm}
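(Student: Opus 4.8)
The plan is to prove the two implications separately, and then the final "moreover" clause. For the forward direction, suppose $B$ is a quasi pseudoalgebra bracket, i.e.\ a bidifferential operator of total degree $\le 1$. This means $\zd_{i}(f)\zd_{j}(g)B = 0$ for all choices $i,j\in\{1,2\}$ and all $f,g\in\cA$. The key observation is what $\zd_2(f)B$ is: for fixed $X$, the map $Y\mapsto \zd_2(f)B(X,Y)=[X,fY]-f[X,Y]$ is $\cA$-linear in $Y$ (because $\zd_2(g)\zd_2(f)B=0$), and it is also $\cA$-linear in $f$ up to the failure measured by $\zd_1$; more precisely, since $\zd_1(h)\zd_2(f)B=0$, the assignment $X\mapsto \zd_2(f)B(X,\cdot)$ is an $\cA$-module homomorphism $\cE\to\End_\cA(\cE)$. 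First I would check that $f\mapsto \zd_2(f)B(X,Y)$ is a derivation-type object in $f$: it is $\ka$-linear, and $\zd_2(fg)B(X,Y)=\zd_2(f)B(X,gY)+f\,\zd_2(g)B(X,Y)$, which combined with $\cA$-linearity of $\zd_2(f)B$ in the second slot shows that $f\mapsto \zd_2(f)B(X,\cdot)\in\End_\cA(\cE)$ is a derivation of $\cA$ valued in $\End_\cA(\cE)=\End(\cE)$. Hence it corresponds to an element $b^l(X)\in\Der(\cA)\ot_\cA\End(\cE)$, viewed as a homomorphism $\zW^1\ot_\cA\cE\to\cE$ via $b^l(X)(\xd f\ot Y)=\zd_2(f)B(X,Y)=[X,fY]-f[X,Y]$. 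The $\cA$-linearity of $X\mapsto b^l(X)$ is exactly the vanishing $\zd_1(h)\zd_2(f)B=0$. The same argument applied to the first slot, using $\zd_1(g)\zd_1(f)B=0$ and $\zd_2(h)\zd_1(f)B=0$, produces $b^r$ with $b^r(Y)(\xd f\ot X)=-(\,[fX,Y]-f[X,Y]\,)=f[X,Y]-[fX,Y]$, and this is the second identity in \eqref{aLqa}.

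For the converse, suppose the homomorphisms $b^l,b^r$ exist and \eqref{aLqa} holds. I would simply read off that $B$ is a bidifferential operator of total degree $\le 1$ by differentiating \eqref{aLqa} once more. Concretely, $\zd_2(f)B(X,Y)=b^l(X)(\xd f\ot Y)$, which is $\cA$-linear in $Y$ (so $\zd_2(g)\zd_2(f)B=0$) and $\cA$-linear in $X$ (so $\zd_1(g)\zd_2(f)B=0$); symmetrically for $\zd_1(f)B$ using $b^r$, giving $\zd_1(g)\zd_1(f)B=0$ and $\zd_2(g)\zd_1(f)B=0$. These four vanishings are precisely condition \eqref{mdo1} with $k=1$, so $B$ has total degree $\le 1$, i.e.\ it is a quasi pseudoalgebra bracket.

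For the last sentence: if $b^l(X),b^r(X)$ take values in $\Der(\cA)\ot_\cA\{\ope{Id}_\cE\}$, write $b^l(X)=\zr(X)\ot\ope{Id}_\cE$, $b^r(X)=\zr^r(X)\ot\ope{Id}_\cE$; then $b^l(X)(\xd f\ot Y)=\la\zr(X),\xd f\ran Y=\zr(X)(f)Y$, so \eqref{aLqa} becomes $[X,fY]=f[X,Y]+\zr(X)(f)Y$, which is the defining Leibniz identity \eqref{lr} for $\ad_X$ to be a derivative endomorphism, and similarly for $\ad_X^r$; hence $\zr,\zr^r$ are genuine anchor maps and (since $b^l,b^r$ are $\cA$-linear) automatically $\cA$-linear. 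I do not anticipate a serious obstacle here; the one point requiring a little care is the identification of an $\cA$-linear derivation $\cA\to\End(\cE)$ with an element of $\Der(\cA)\ot_\cA\End(\cE)$ acting on $\zW^1\ot_\cA\cE$ — this uses faithfulness of $\cE$ and the fact that $\zW^1$ is generated by $\xd\cA$, together with the already-noted commutativity of the operators $\zd_i(f)$, and I would state it as a short lemma before assembling the proof.
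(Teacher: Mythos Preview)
Your proposal is correct and follows essentially the same route as the paper: introduce $A(X,f,Y)=(\zd_2(f)B)(X,Y)$, use $\zd_1\zd_2B=0$ and $\zd_2\zd_2B=0$ for $\cA$-linearity in $X$ and $Y$, verify the derivation property in $f$ via the identity $\zd_2(fg)B(X,Y)=\zd_2(f)B(X,gY)+f\,\zd_2(g)B(X,Y)$, and then run the converse by reading off the four vanishings from \eqref{aLqa}. Your treatment is in fact a bit more complete than the paper's, since you spell out the final clause about genuine anchor maps and flag the identification of an $\End(\cE)$-valued derivation with an element of $\Der(\cA)\ot_\cA\End(\cE)$ acting on $\zW^1\ot_\cA\cE$, which the paper uses without comment.
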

\begin{proof} Assume first that the bracket $B$ is a bidifferential operator of total degree $\le 1$ and define a
three-linear map of vector spaces  $A:\cE\ti\cA\ti\cE\ra\cE$ by
$$A(X,g,Y)=(\zd_2(g)B)(X,Y)=[X,gY]-g[X,Y]\,.
$$
It is easy to see that $A$ is $\cA$-linear with respect to the first and the third argument, and a derivation
with respect to the second. Indeed, as
$$
(\zd_1(f)\zd_2(g)B)(X,Y)=A(fX,g,Y)-fA(X,g,Y)=0\,,
$$
we get $\cA$-linearity with respect to the first argument. Similarly, from $\zd_2(f)\zd_2(g)B=0$, we get the
same conclusion for the third argument. We have also
\bea\nn A(X,fg,Y)&=&[X,fgY]-fg[X,Y]=[X,fgY]-f[X,gY]+f[X,gY]-fg[X,Y]\\
&=&A(X,f,gY)+fA(X,g,Y)=gA(X,f,Y)+fA(X,g,Y)\,,\label{w1} \eea thus the derivation property. This implies that
$A$ is represented by an $\cA$-module homomorphism $b^l:\cE\ra\Der(\cA)\otimes_\cA\End(\cE)$. Analogous
considerations give us the right generalized anchor map $b^r$.

Conversely, assume the existence of both generalized anchor maps. Then, the map $A$ defined as above reads
$A(X,f,Y)=b^l(X)(\xd f\ot Y)$, so is $\cA$-linear with respect to $X$ and $Y$. Hence,
$$(\zd_1(f)\zd_2(g)B)(X,Y)=A(fX,g,Y)-fA(X,g,Y)=0$$ and
$$(\zd_2(f)\zd_2(g)B)(X,Y)=A(X,g,fY)-fA(X,g,Y)=0\,.$$
A similar reasoning for $b^r$ gives $(\zd_1(f)\zd_1(g)B)(X,Y)=0$, so the bracket is a bidifferential operator
of total order $\le 1$.
\end{proof}

In the case when we deal with a quasi algebroid, i.e., $\cA=\cim$ and $\cE=\Sec(E)$ for a vector bundle
$\zt:E\ra M$, the generalized anchor maps (\ref{Lqa}) are associated with vector bundle maps that we denote
(with some abuse of notations) also by $b^r,b^l$,
$$b^r,b^l:E\ra\sT M\otimes_M\End(E)\,,$$
covering the identity on $M$. Here, $\End(E)$ is the endomorphism bundle of $E$, so $\End(E)\simeq E^\ast\ot_M
E$. The induced maps of sections produce from sections of $E$ sections of $\sT M\otimes_M\End(E)$ which, in
turn, act on sections of $\sT^\ast M\otimes_M E$ in the obvious way. An algebroid version of Theorem \ref{T1}
is the following.

\begin{thm}\label{T1a} An $\R$-bilinear bracket $B=[\cdot,\cdot]$ on the real space $\Sec(E)$ of sections of a vector
bundle $\zt:E\ra M$ defines a quasi
algebroid structure if and only if there are vector bundle morphisms
\be\label{Lqa1} b^r,b^l:E\ra\sT
M\otimes_M\End(E) \ee covering the identity on $M$, called {\em generalized anchor maps, right and left}, such
that, for all $X,Y\in\Sec(E)$ and all $f\in C^\infty(M)$, (\ref{aLqa}) is satisfied. The generalized anchor maps are
actual anchor maps, if they take values in $\sT M\otimes\la \ope{Id}_E\ran\simeq\sT M$.
\end{thm}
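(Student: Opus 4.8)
The plan is to deduce Theorem~\ref{T1a} from its purely algebraic counterpart, Theorem~\ref{T1}, together with the standard dictionary between section modules of vector bundles and the bundles themselves. Put $\cA=\cim$ and $\cE=\Sec(E)$; by definition a quasi algebroid structure on $\Sec(E)$ is nothing but a quasi pseudoalgebra structure on the $\cim$-module $\Sec(E)$. Hence Theorem~\ref{T1} already tells us that $B$ is a quasi algebroid bracket if and only if there exist $\cim$-module homomorphisms
\[
b^r,b^l:\Sec(E)\ra\Der(\cim)\ot_{\cim}\End_{\cim}(\Sec(E))
\]
satisfying $(\ref{aLqa})$, so the only remaining task is to rewrite this target module and these homomorphisms in terms of vector bundles.

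For that I would use three elementary identifications, all valid because $M$ is a smooth manifold and $E$ is of finite rank: first, $\Der(\cim)\cong\Sec(\sT M)$, since derivations of $\cim$ are precisely smooth vector fields; second, $\End_{\cim}(\Sec(E))\cong\Sec(\End(E))$, with $\End(E)\simeq E^\ast\ot_M E$, since a $\cim$-linear endomorphism of $\Sec(E)$ is fibrewise multiplication by a section of the endomorphism bundle; and third, the compatibility of $\ot_{\cim}$ with $\Sec$ for vector bundles, which gives $\Der(\cim)\ot_{\cim}\End_{\cim}(\Sec(E))\cong\Sec(\sT M\ot_M\End(E))$. Under these identifications $b^l$ becomes a $\cim$-module homomorphism $\Sec(E)\ra\Sec(\sT M\ot_M\End(E))$; since $\cim$-module homomorphisms between section modules of vector bundles over the same base $M$ are exactly the maps on sections induced by vector bundle morphisms covering $\ope{Id}_M$, we obtain the required morphism $b^l:E\ra\sT M\ot_M\End(E)$ over the identity, and likewise $b^r$. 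Conversely, any pair of such bundle morphisms induces $\cim$-linear maps on sections, so the converse half of Theorem~\ref{T1} applies; identity $(\ref{aLqa})$ transfers verbatim, being an identity between sections.

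For the final assertion, Theorem~\ref{T1} says the generalized anchor maps are genuine anchors precisely when they take values in $\Der(\cim)\ot_{\cim}\{\ope{Id}_{\Sec(E)}\}$; under the dictionary above this is the condition that $b^l$ (resp.\ $b^r$) take values in $\sT M\ot_M\la\ope{Id}_E\ran$, where $\la\ope{Id}_E\ran$ is the trivial line subbundle of $\End(E)$ spanned by $\ope{Id}_E$. Since $\sT M\ot_M\la\ope{Id}_E\ran\simeq\sT M$, writing $b^l(X)=\zr(X)\ot\ope{Id}_E$ recovers $b^l(X)(\xd f\ot Y)=\zr(X)(f)\,Y$, i.e.\ the usual anchor relation $[X,fY]=f[X,Y]+\zr(X)(f)Y$.

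I expect the one point needing care to be the passage from $\cim$-module homomorphisms to vector bundle morphisms: one must invoke the Serre--Swan-type facts that $\End_{\cim}(\Sec E)=\Sec(\End E)$ and that $\ot_{\cim}$ commutes with $\Sec$, both of which rely on $E$ being of finite rank (local triviality and partitions of unity), and one must note that a homomorphism of $\cim$-modules automatically covers the identity on $M$. Everything else is a direct transcription of Theorem~\ref{T1}.
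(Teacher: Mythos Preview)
Your proposal is correct and matches the paper's approach: the paper does not give a separate proof of Theorem~\ref{T1a} but presents it explicitly as the vector-bundle specialization of Theorem~\ref{T1}, noting in the preceding paragraph exactly the dictionary $\Der(\cim)\leftrightarrow\Sec(\sT M)$, $\End_{\cim}(\Sec E)\leftrightarrow\Sec(\End E)$ that you spell out. Your write-up is in fact more detailed than the paper's, which simply asserts that the $\cim$-linear maps $b^r,b^l$ are ``associated with vector bundle maps'' covering the identity.
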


\section{Loday algebroids}
Let us isolate and specify the most important particular cases of Definition \ref{def}.
\begin{definition}\

\begin{enumerate}
\item A {\it {faint} Loday algebroid} (resp., {\it {faint} Lie algebroid}) on a vector bundle $E$ over a
base manifold $M$ is a Loday bracket (resp., a Lie bracket) on the $C^\infty(M)$-module $\Sec(E)$ of smooth
sections of $E$ which is a bidifferential operator.

\item A {\it weak Loday algebroid} (resp., {\it weak Lie algebroid}) on a vector bundle $E$ over a base
manifold $M$ is a Loday bracket (resp., a Lie bracket) on the $C^\infty(M)$-module $\Sec(E)$ of smooth
sections of $E$ which is a bidifferential operator of degree $\le 1$ with respect to each variable separately.

\item A {\it Loday quasi algebroid} (resp., {\it Lie quasi algebroid}) on a vector bundle $E$ over a base
manifold $M$ is a Loday bracket (resp., Lie bracket) on the $C^\infty(M)$-module $\Sec(E)$ of smooth sections
of $E$ which is a bidifferential operator of total degree $\le 1$.

\item A {\it QD-algebroid} (resp., {\it skew QD-algebroid, Loday QD-algebroid, Lie QD-algebroid}) on a vector
bundle $E$ over a base manifold $M$ is an $\R$-bilinear bracket (resp., skew bracket, Loday bracket, Lie
bracket) on the $C^\infty(M)$-module $\Sec(E)$ of smooth sections of $E$ for which the adjoint operators
$\ad_X$ and $\ad_X^r$ are derivative endomorphisms.
\end{enumerate}
\end{definition}
\begin{remark} {\it Lie pseudoalgebras} appeared first in the paper of Herz  \cite{He}, but one can find similar concepts
under more than a dozen of names in the literature  (e.g. {\it Lie modules,  $(R,A)$-Lie   algebras,
Lie-Cartan pairs, Lie-Rinehart algebras, differential algebras}, etc.). Lie algebroids were introduced  by
Pradines \cite{Pr} as infinitesimal parts of differentiable groupoids. In the same year a book by Nelson was
published where a general theory of Lie modules, together with a big part of the corresponding differential
calculus, can be found. We also refer to a survey article by Mackenzie \cite{Ma}. QD-algebroids, as well as
Loday QD-algebroids  and Lie QD-algebroids, have been introduced in \cite{G1}. In \cite{GU,GGU} Loday strong
QD-algebroids have been called Loday algebroids and strong QD-algebroids have been called just {\it
algebroids}. The latter served as geometric framework for generalized Lagrange and Hamilton formalisms.

In the case of line bundles, $\rk E=1$, Lie QD-algebroids are exactly {\it local Lie algebras} in the sense of
Kirillov \cite{Ki}. They are just {\it Jacobi brackets}, if the bundle is trivial, $\Sec(E)=\cim$. Of course,
Lie QD-algebroid brackets are first-order bidifferential operators by definition, while Kirillov has
originally started with considering Lie brackets on sections of line bundles determined by local operators and
has only later discovered that these operators have to be bidifferential operators of first order. A purely
algebraic version of Kirillov's result has been proven in \cite{G}, Theorems 4.2 and 4.4, where bidifferential
Lie brackets on associative commutative algebras containing no nilpotents have been considered.
\end{remark}

\begin{example}\label{e1} Let us consider a Loday algebroid bracket in the sense of \cite{Ha,HM,ILMP,KS,MM,Wa}, i.e., a Loday algebra bracket $[\cdot,\cdot]$ on the $\cim$-module $\cE=\Sec(E)$ of sections of a vector bundle $\zt:E\ra M$ for which there is a vector bundle morphism $\zr:E\ra\sT M$ covering the identity on $M$ (the left anchor map) such that (\ref{anchor}) is satisfied. Since, due to (\ref{anhom}), the anchor map is necessarily a homomorphism of the Loday bracket into the Lie bracket of vector fields, our Loday algebroid is just a Lie algebroid in the case when $\zr$ is injective.
In the other cases the anchor map does not determine the Loday algebroid structure, in particular does not
imply any locality of the bracket with respect to the first argument. Thus, this concept of Loday algebroid is
not geometric.

For instance, let us consider a Whitney sum bundle $E=E_1\op_M E_2$ with the canonical projections $p_i:E\ra
E_i$ and any $\R$-linear map $\zf:\Sec(E_1)\ra\cim$. Being only $\R$-linear, $\zf$ can be chosen very strange
non-geometric and non-local. Define now the following bracket on $\Sec(E)$:
$$[X,Y]=\zf(p_1(X))\cdot p_2(Y)\,.$$
It is easy to see that this is a Loday bracket which admits the trivial left anchor, but the bracket is
non-local and non-geometric as well.
\end{example}

\begin{example} A standard example of a weak Lie algebroid bracket is a Poisson (or, more generally, Jacobi) bracket $\{\cdot,\cdot\}$ on $C^\infty(M)$ viewed as a $\cim$-module of section of the trivial line bundle $M\ti\R$.
It is a bidifferential operator of order $\le 1$ and the total order $\le 2$. It is actually a Lie QD-algebroid
bracket, as $\ad_f$ and $\ad_f^r$ are, by definition, derivations (more generally, first-order differential
operators). Both anchor maps coincide and give the corresponding Hamiltonian vector fields, $\zr(f)(g)=\{
f,g\}$. The map $f\mapsto \zr(f)$ is again a differential operator of order 1, so is not implemented by a
vector bundle morphism $\zr:M\ti\R\ra\sT M$. Therefore, this weak Lie algebroid is not a Lie algebroid. {This has a straightforward generalization to {\em Kirillov brackets} being local Lie brackets on sections of a line bundle \cite{Ki}.}
\end{example}
\begin{example}
Various brackets are associated with a volume form $\zw$ on a manifold $M$ of dimension $n$ (see e.g. \cite{Li}). Denote with $\cX^k(M)$ (resp., $\zW^k(M)$) the spaces of $k$-vector fields (resp., $k$-forms) on $M$. As
the contraction maps $\cX^k(M)\ni K\mapsto i_K\zw\in\zW^{n-k}(M)$ are isomorphisms of $\cim$-modules, to the
de Rham cohomology operator $\xd:\zW^{n-k-1}(M)\ra\zW^{n-k}(M)$ corresponds a homology operator
$\zd:\cX^k(M)\ra\cX^{k-1}(M)$. The skew-symmetric bracket $B$ on $\cX^2(M)$ defined in \cite{Li} by
$B(t,u)=-\zd(t)\we\zd(u)$ is not a Lie bracket, since its Jacobiator $B(B(t,u),v)+c.p.$ equals
$\zd(\zd(t)\we\zd(u)\we\zd(v))$. A solution proposed in \cite{Li} depends on considering the algebra $N$ of bivector
fields modulo $\zd$-exact bivector fields for which the Jacobi anomaly disappears, so that $N$ is a Lie
algebra.

Another option is to resign from skew-symmetry and define the corresponding {faint} Loday algebroid. In view
of the duality between $\cX^2(M)$ and $\zW^{n-2}$, it is possible to work with $\zW^{n-2}(M)$ instead. For
$\zg\in\zW^{n-2}(M)$ we define the vector field $\wh{\zg}\in\cX(M)$ from the formula $i_{\wh{\zg}}\zw=\xd\zg$.
The bracket in $\zW^{n-2}(M)$ is now defined by {(see \cite{Lo})}
$$\{\zg,\zb\}_\zw=\Ll_{\wh{\zg}}\zb=i_{\wh{\zg}}i_{\wh{\zb}}\zw+\xd
i_{\wh{\zg}}\zb\,.$$ Since we have
$$i_{[\wh{\zg},\wh{\zb}]_{vf}}\zw=\Ll_{\wh{\zg}}i_{\wh{\zb}}\zw-i_{\wh{\zb}}\Ll_{\wh{\zg}}\zw=
\xd i_{\wh{\zg}}i_{\wh{\zb}}\zw=\xd \{\zg,\zb\}_\zw\,,$$ it holds
$$\{\zg,\zb\}_\zw^{\wh{}}=[\wh{\zg},\wh{\zb}]_{vf}\,.$$
Therefore,
$$\{\{\zg,\zb\}_\zw,\zh\}_\zw=\Ll_{\{\zg,\zb\}_\zw^{\wh{}}}\zh=
\Ll_{\wh{\zg}}\Ll_{\wh{\zb}}\zh-\Ll_{\wh{\zb}}\Ll_{\wh{\zg}}\zh=\{\zg,\{\zb,\zh\}_\zw\}_\zw
-\{\zb,\{\zg,\zh\}_\zw\}_\zw\,,$$ so the Jacobi identity is satisfied and we deal with a Loday algebra. This
is in fact a {faint} Loday algebroid structure on $\we^{n-2}\sT^\ast M$ with the left anchor
$\zr(\zg)=\wh{\zg}$. This bracket is a bidifferential operator which is first-order with respect to the second
argument and second-order with respect to the first one.
\end{example}

Note that Lie QD-algebroids are automatically Lie algebroids, if the rank of the bundle $E$ is $>1$
\cite[Theorem 3]{G1}. Also some other of the above concepts do not produce qualitatively new examples.
\begin{thm} (\cite{G1,GM,GM1})\

\begin{description}
\item{(a)} Any Loday bracket on $\cim$ (more generally, on sections of a line bundle) which is a
bidifferential operator is actually a Jacobi bracket (first-order and skew-symmetric). \item{(b)} Let
$[\cdot,\cdot]$ be a Loday bracket on sections of a vector bundle $\zt:E\ra M$, admitting anchor maps
$\zr,\zr^r:\Sec(E)\ra\cX(M)$ which assign vector fields to sections of $E$ and such that (\ref{anchors}) is
satisfied (Loday QD-algebroid on $E$). Then, the anchors coincide, $\zr=\zr^r$, and the bracket is
skew-symmetric at points $p\in M$ in the support of $\zr=\zr^r$. Moreover, if the rank of $E$ is $>1$, then
the anchor maps are $\cim$-linear, i.e. they come from a vector bundle morphism $\zr=\zr^r:E\ra M$. In other words,
any Loday QD-algebroid is actually, around points where one anchor does not vanish, a Jacobi bracket if
$\rk(E)=1$, or Lie algebroid bracket if $\rk(E)>1$.
\end{description}
\end{thm}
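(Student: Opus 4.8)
For part (a), the plan is to reduce everything to skew-symmetry. A skew-symmetric Loday bracket obeys the cyclic Jacobi identity (\ref{JI2}), hence is a Lie bracket, and a bidifferential Lie bracket on $\cim$ --- or, after a local trivialization, on sections of a line bundle --- is automatically of first order by the algebraic version of Kirillov's theorem (\cite{G}, Theorems 4.2 and 4.4); so it is enough to prove that a bidifferential Loday bracket $[\cdot,\cdot]$ on $\cim$ is skew-symmetric. For that, consider its symmetric part $S(f,g)=[f,g]+[g,f]$, which is again a bidifferential operator. Writing (\ref{JI}) in the form $[[f,g],h]=[f,[g,h]]-[g,[f,h]]$ and adding the same identity with $f$ and $g$ interchanged, one gets $[S(f,g),h]=0$ for all $h$; thus $S$ takes values in the left annihilator $\{u:[u,\cdot]=0\}$ of the bracket. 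Since $\cim$ has no nilpotents, a bidifferential operator valued in this left annihilator must vanish --- this analytic point, where commutativity and the absence of nilpotents are essential, is the real core of part (a) and is contained in \cite{GM,GM1}. Hence $S\equiv0$.

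For part (b), I would use throughout the two Leibniz rules (\ref{anchors}) and, since the bracket is Loday, the anchor homomorphism (\ref{anhomn}). Two auxiliary facts come for free. First, purely Loday-algebraically: with $S(X,Y)=[X,Y]+[Y,X]$, adding (\ref{JI}) to its transposes gives both $[S(X,Y),Z]=0$ for all $Z$ and the fact that $\ad_X=[X,\cdot]$ is a derivation of $S$, namely $[X,S(Y,Z)]=S([X,Y],Z)+S(Y,[X,Z])$; inserting $[S(X,Y),fZ]=0$ into the left Leibniz rule then yields $\zr(S(X,Y))=0$. Second, a right-hand companion of (\ref{anhomn}): applying $\ad_X$ --- which, by (\ref{JI}), is a derivation of the bracket --- to the right Leibniz rule, reducing modulo (\ref{JI}) and using faithfulness of the module, one obtains $\zr^r([X,Y])=[\zr(X),\zr^r(Y)]_c$, so that $(\zr-\zr^r)([X,Y])=[\zr(X),(\zr-\zr^r)(Y)]_c$.

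The next, and genuinely laborious, step is $\zr=\zr^r$. I would expand $[[fX,Y],Z]$ in two ways --- once as $[fX,[Y,Z]]-[Y,[fX,Z]]$ by (\ref{JI}) and then via (\ref{anchors}), once via (\ref{anchors}) and then by (\ref{JI}) --- and cancel the common terms using (\ref{anhomn}), the right companion just obtained, and $[S(X,Y),Z]=0$; faithfulness of the module then forces $(\zr(Y)-\zr^r(Y))(f)\equiv0$. (This is the computation carried out in \cite{G1}.) Once $\zr=\zr^r$, call it $\zr$, the Leibniz anomalies of $S$ collapse: $S(fX,Y)=fS(X,Y)=S(X,fY)$, so $S$ is $\cim$-bilinear, while still $[S(X,Y),Z]=0$. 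Consequently, at any $p$ with $\zr(X_0)(p)\neq0$ one has, for all $Y,Z,W,f$, $0=[S(fY,Z),W]=[fS(Y,Z),W]=-\zr(W)(f)\,S(Y,Z)$ by the right Leibniz rule; choosing $W=X_0$ and $f$ with $\zr(X_0)(f)(p)\neq0$ gives $S(Y,Z)(p)=0$. So the bracket is skew-symmetric on the open set $\{\zr\neq0\}$, and hence on its closure, the support of $\zr$.

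It remains to settle the rank dichotomy, and the anomaly relation (\ref{QD1}), $(\zr(fX)-f\zr(X))(g)Y=(\zr^r(gY)-g\zr^r(Y))(f)X$, does this. If $\rk E=1$, then $[\cdot,\cdot]$, being bidifferential, is a Jacobi bracket by part (a). If $\rk E>1$, then at each $p$ one may choose $Y$ with $Y(p)$ linearly independent of $X(p)$ (or merely $Y(p)\neq0$ when $X(p)=0$); the left-hand side is then a multiple of $Y$ and the right-hand side a multiple of $X$, so both vanish at $p$, yielding $\zr(fX)=f\zr(X)$ and, symmetrically, $\zr^r(gY)=g\zr^r(Y)$. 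Thus $\zr=\zr^r$ is $\cim$-linear, hence induced by a bundle map $E\to\sT M$; and around points where the anchor does not vanish we then have, by the previous paragraph, a skew --- hence Lie --- bracket together with a genuine anchor obeying the Leibniz rule, that is, a Lie algebroid. I expect the main obstacle to be exactly the identity juggling behind $\zr=\zr^r$ (and, to a lesser extent, behind the right companion of the anchor homomorphism): there one must play the Jacobi identity against the two \emph{different} Leibniz rules, and the failure of skew-symmetry blocks the usual shortcuts --- everything else is short once $\zr=\zr^r$ is in hand.
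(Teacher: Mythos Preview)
The paper itself gives no proof of this theorem: it is stated with the attribution ``(\cite{G1,GM,GM1})'' and immediately followed by commentary, with no \texttt{proof} environment. There is therefore nothing in the paper to compare your attempt against; the authors are simply quoting a known result from the cited references.

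That said, your outline tracks those references closely and is essentially sound. A few comments. In part~(a) you correctly reduce to $S=0$ via $[S(f,g),h]=0$, and you honestly flag that the step ``a bidifferential operator valued in the left annihilator must vanish'' is the non-trivial analytic core, deferring it to \cite{GM,GM1}; that is exactly where the work lies, and your sketch does not pretend otherwise. In part~(b) your derivation of the ``right companion'' $\zr^r([X,Y])=[\zr(X),\zr^r(Y)]_c$ is correct, and your plan of expanding $[[fX,Y],Z]$ two ways is indeed the computation of \cite{G1}; be aware that when you carry it out, the residual identity still contains a term $\zr^r(Z)(f)\,S(X,Y)$, so one does not get $\zr=\zr^r$ from a single comparison but needs a short additional argument (e.g.\ symmetry in $X,Y$ or varying $Z$). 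Your argument that $S$ vanishes on the support of $\zr$ once $\zr=\zr^r$, via $0=[fS(Y,Z),W]=-\zr(W)(f)\,S(Y,Z)$, is clean and correct, as is the rank dichotomy via (\ref{QD1}).
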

The above results show that relaxing skew-symmetry and considering Loday brackets on $\cim$ or $\Sec(E)$ does
not lead to new structures (except for just bundles of Loday algebras), if we assume differentiability in the
first case and the existence of both (possibly different) anchor maps in the second. Therefore, a definition
of Loday algebroids that admits a rich family of new examples, must resign from the traditionally understood
right anchor map.

\medskip
\noindent The definition of the main object of our studies can be formulated as follows.
\begin{definition}\label{d1}
A {\it Loday algebroid} on a vector bundle $E$ over a base manifold $M$ is a Loday bracket on the
$C^\infty(M)$-module $\Sec(E)$ of smooth sections of $E$ which is a bidifferential operator of total degree
$\le 1$ and for which the adjoint operator $\ad_X$ is a derivative endomorphism.
\end{definition}
{Of course, the above definition of Loday algebroid is stronger than those known in the literature (e.g. \cite{Ha,HM,ILMP,KS,MM,Wa}), which assume only the existence of a left anchor and put no differentiability requirements for the first variable.
}
\begin{thm}\label{LodAld} A Loday bracket $[\cdot,\cdot]$ on the real space $\Sec(E)$ of sections of a vector bundle $\zt:E\ra M$
defines a Loday algebroid structure
if and only if there are vector bundle morphisms
\be\label{Lqa1a} \zr:E\ra\sT M\,, \quad \za:E\ra\sT
M\otimes_M\End(E)\,, \ee covering the identity on $M$, such that, for all $X,Y\in\Sec(E)$ and all $f\in
C^\infty(M)$, \be\label{aLqa1} [X,fY]=f[X,Y]+\zr(X)(f)Y\,,\quad [fX,Y]=f[X,Y]-\zr(Y)(f)X+\za(Y)(\xd f\otimes
X)\,. \ee If this is the case, {the anchors are uniquely determined and} the left anchor induces a homomorphism of the Loday bracket into the bracket
$[\cdot,\cdot]_{vf}$ of vector fields,
$$\zr([X,Y])=[\zr(X),\zr(Y)]_{vf}\,.$$
\end{thm}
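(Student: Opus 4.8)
The plan is to prove the characterization in Theorem~\ref{LodAld} by specializing the general pseudoalgebra and quasi algebroid results already established. Recall that a Loday algebroid in the sense of Definition~\ref{d1} is, by definition, a Loday bracket which is a bidifferential operator of total degree $\le 1$ and whose left adjoint $\ad_X=[X,\cdot]$ is a derivative endomorphism. First I would invoke Theorem~\ref{T1a}: since $B$ is bidifferential of total degree $\le 1$, there exist vector bundle morphisms $b^r,b^l:E\ra\sT M\ot_M\End(E)$ covering the identity with
\be\label{plana}
[X,fY]=f[X,Y]+b^l(X)(\xd f\ot Y)\,,\qquad [fX,Y]=f[X,Y]-b^r(Y)(\xd f\ot X)\,.
\ee
It then remains to identify $b^l$ with an honest anchor $\zr:E\ra\sT M$ and to rename $b^r$ as $\za$. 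The first task is immediate from the derivative-endomorphism hypothesis: $\ad_X$ being a quasi-derivation means $[X,fY]-f[X,Y]=\wh{\ad_X}(f)Y$ for a derivation $\wh{\ad_X}\in\Der(C^\infty(M))$, i.e.\ $b^l(X)(\xd f\ot Y)=\zr(X)(f)Y$ with $\zr(X)=\wh{\ad_X}$; so $b^l$ takes values in $\sT M\ot\la\ope{Id}_E\ran\simeq\sT M$, which is exactly the ``actual anchor map'' situation in Theorem~\ref{T1a}. Moreover the same theorem (or the computation $\zr(fX)-f\zr(X)$ evaluated via $\zd_1(f)\zd_2(g)B$, as in Theorem~3.7) shows $\zr$ is $C^\infty(M)$-linear, hence comes from the asserted bundle morphism $E\ra\sT M$.

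Next I would massage the right-hand identity in \eqref{plana} into the stated form. Writing $\za:=b^r$, one gets $[fX,Y]=f[X,Y]-\za(Y)(\xd f\ot X)$, but the statement wants $[fX,Y]=f[X,Y]-\zr(Y)(f)X+\za(Y)(\xd f\ot X)$. These differ only by a relabeling/sign absorption: given $b^r$, define $\za$ by $\za(Y)(\xd f\ot X):=\zr(Y)(f)X-b^r(Y)(\xd f\ot X)$, which is again manifestly a vector bundle morphism $E\ra\sT M\ot_M\End(E)$ covering the identity because both terms on the right are (the first being $\zr(Y)\ot\ope{Id}_E$), and then \eqref{aLqa1} holds by construction. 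The converse direction — that a Loday bracket satisfying \eqref{aLqa1} for some such $\zr,\za$ is automatically a bidifferential operator of total degree $\le 1$ with $\ad_X$ a derivative endomorphism — follows by running Theorem~\ref{T1a} backwards: \eqref{aLqa1} exhibits $b^l=\zr\ot\ope{Id}_E$ and $b^r(Y)(\xd f\ot X)=\zr(Y)(f)X-\za(Y)(\xd f\ot X)$ as generalized anchor maps, so total degree $\le 1$ is guaranteed, and the first equation of \eqref{aLqa1} says precisely that $\ad_X$ is a quasi-derivation with anchor $\zr(X)$.

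Finally, the homomorphism property $\zr([X,Y])=[\zr(X),\zr(Y)]_{vf}$ is obtained exactly as in the proof of Theorem~3.7 (equation \eqref{anhom}): one applies the Jacobi identity \eqref{JI} to $[X,[Y,fZ]]$ and expands each bracket against $f$ using the left Leibniz rule \eqref{aLqa1}, collecting the coefficient of $Z$; faithfulness of the module then yields $\zr([X,Y])(f)=\zr(X)(\zr(Y)(f))-\zr(Y)(\zr(X)(f))$, i.e.\ the claimed identity into the commutator bracket of vector fields. Since the left-anchor part of the structure is formally identical to that of a genuine Loday pseudoalgebra, this step needs no new idea. The only point requiring any care — and the closest thing to an obstacle — is the bookkeeping in the second identity of \eqref{aLqa1}: one must check that the ``correction'' term $\zr(Y)(f)X$ does not destroy $C^\infty(M)$-linearity of $\za$ in $Y$ and does not conflict with the symmetry of the mixed second-order conditions $\zd_1(f)\zd_1(g)B=0$; but since $\zr\ot\ope{Id}_E$ is itself a bundle morphism of the required type, the difference $\za=\zr\ot\ope{Id}_E-b^r$ inherits all the needed properties, so this is routine rather than substantive.
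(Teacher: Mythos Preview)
Your proposal is correct and follows essentially the same route as the paper's own proof, which is a one-line reduction to Theorem~\ref{T1a}: one identifies $b^l$ with the actual anchor $\zr$ via the derivative-endomorphism hypothesis and then defines $\za$ by writing the generalized right anchor as $b^r=\zr\ot\ope{Id}_E-\za$. Your treatment is more explicit (you spell out the converse and the anchor-homomorphism argument), but the underlying argument is identical.
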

\begin{proof} This is a direct consequence of Theorem \ref{T1a} and the fact that an algebroid bracket has the left anchor map. We
just write the generalized right anchor map as $b^r=\zr\ot I-\za$.
\end{proof}

To give a local form of a Loday algebroid bracket, let us recall that sections $X$ of the vector bundle $E$
can be identified with linear (along fibers) functions $\zi_X$ on the dual bundle $E^\ast$. Thus, fixing local
coordinates $(x^a)$ in $M$ and a basis of local sections $e_i$ of $E$, we have a corresponding system
$(x^a,\zx_i=\zi_{e_i})$ of affine coordinates in $E^\ast$. As local sections of $E$ are identified with linear
functions $\zs=\zs^i(x)\zx_i$, the Loday bracket is represented by a bidifferential operator $B$ of total
order $\le 1$:
$$B(\zs^i_1(x)\zx_i,\zs^j_2(x)\zx_j)=c_{ij}^k(x)\zs^i_1(x)\zs^j_2(x)\zx_k+\zb_{ij}^{ak}(x)\frac{\pa\zs^i_1}
{\pa x^a}(x)\zs^j_2(x)\zx_k+\zg_{ij}^{ak}(x)\zs^i_1(x)\frac{\pa\zs^j_2}{\pa x^a}(x)\zx_k\,.$$
Taking into account the existence of the left anchor, we have
\bea\label{loc} B(\zs^i_1(x)\zx_i,\zs^j_2(x)\zx_j)&=&c_{ij}^k(x)\zs^i_1(x)\zs^j_2(x)\zx_k+\za_{ij}^{ak}(x)
\frac{\pa\zs^i_1}{\pa x^a}(x)\zs^j_2(x)\zx_k\\
&&+\zr_{i}^{a}(x)\left(\zs^i_1(x)\frac{\pa\zs^j_2}{\pa x^a}(x)-\frac{\pa\zs^j_1}{\pa
x^a}(x)\zs^i_2(x)\right)\zx_j\,. \nn\eea Since sections of $\End(E)$ can be written in the form of linear
differential operators, we can rewrite (\ref{loc}) in the form
\be\label {loc1}
B=c_{ij}^k(x)\zx_k\pa_{\zx_i}\ot\pa_{\zx_j}+\za_{ij}^{ak}(x)\zx_k\pa_{x^a}\pa_{\zx_i}\ot\pa_{\zx_j}
+\zr_{i}^{a}(x)\pa_{\zx_i}\we\pa_{x^a}\,.
\ee
Of course, there are additional relations between coefficients of $B$ due to the fact that the Jacobi identity
is satisfied.

\section{Examples}
\subsection{Leibniz algebra}

Of course, a finite-dimensional Leibniz algebra is a Leibniz algebroid over a point.

\subsection{Courant-Dorfman bracket}

The {\it Courant bracket} is defined on sections of $\cT M=\sT M\oplus_M\sT^*M$ as follows:
\be\label{CB}[X+\zw,Y+\zh]=[X,Y]_{vf}+\Ll_X\zh-\Ll_Y\zw-\frac{1}{2}\left(d\,i_X\zh-d\,i_Y\zw\right)\,.
\ee This bracket is antisymmetric, but it does not satisfy the
Jacobi identity; the Jacobiator is an exact 1-form. It is, as easily seen, given by a bidifferential operator
of total order $\le 1$, so it is a skew quasi algebroid.

\medskip
The {\it Dorfman bracket} is defined on the same module of sections. Its definition is the same as for
Courant, except that the corrections and the exact part of the second Lie derivative disappear:
\be\label{CD}[X+\zw,Y+\zh]=[X,Y]_{vf}+\Ll_X\zh-i_Y\,\xd\zw=[X,Y]_{vf}+i_X\,\xd\zh-i_Y\,\xd\zw+\xd\,i_X\zh\,.
\ee
This bracket is visibly non skew-symmetric, but it is a Loday bracket which is bidifferential of total order
$\le 1$. Moreover, the Dorfman bracket admits the classical left anchor map
\be\label{CDa}\zr:\cT M=\sT M\oplus_M\sT^*M\ra \sT M
\ee
which is the projection onto the first component. Indeed,
$$[X+\zw,f(Y+\zh)]=[X,fY]_{vf}+\Ll_Xf\zh-i_{fY}\,\xd\zw=f[X+\zw,Y+\zh]+X(f)(Y+\zh)\,.$$
For the right generalized anchor we have
\beas[f(X+\zw),Y+\zh]&=&[fX,Y]_{vf}+i_{fX}\,\xd\zh-i_Y\,\xd(f\zw)+\xd\,i_{fX}\zh\\
&=&f[X+\zw,Y+\zh]-Y(f)(X+\zw)+\xd f\we (i_X\zh+i_Y\zw)\,,\eeas so that
 $$\za(Y+\zh)(\xd f\ot(X+\zw))=\xd f\we (i_X\zh+i_Y\zw)=2\la X+\zw,Y+\zh\ran_+\cdot\xd f\,,$$
where
$$\la
X+\zw,Y+\zh\ran_+=\frac{1}{2}\left(i_X\zh+i_Y\zw\right)=\frac{1}{2}\left(\la X,\zh\ran+\la
Y,\zw\ran\right)\,,$$ is a symmetric nondegenerate bilinear form on $\cT M$ (while $\la\cdot,\cdot\ran$ is the
canonical pairing). We will refer to it, though it is not positively defined, as the {\it scalar product} in
the bundle $\cT M$.

Note that $\za(Y+\zh)$ is really a section of $\sT M\ot_M\End(\sT M\oplus_M\sT^\ast M)$ that in local
coordinates reads
$$\za(Y+\zh)=\sum_k\partial_{x^k}\ot(\xd x^k\we (i_{\zh}+i_Y))\,.$$
Hence, the Dorfman bracket is a Loday algebroid bracket.\medskip

It is easily checked that the Courant bracket is the antisymmetrization of the Dorfman bracket, and that the
Dorfman bracket is the Courant bracket plus $\xd\la X+\zw,Y+\zh\ran_+$
\subsection{Twisted Courant-Dorfman bracket}
The Courant-Dorfman bracket can be twisted by adding a term associated with a 3-form $\zY$ \cite{YKS1,SW}:
\be\label{TCD}[X+\zw,Y+\zh]=[X,Y]_{vf}+\Ll_X\zh-i_Y\,\xd\zw+i_{X\we Y}\zY\,.
\ee
It turns out that this bracket is still a Loday bracket if the 3-form $\zY$ is closed. As the added term is
$\cim$-linear with respect to $X$ and $Y$, the anchors remain the same, thus we deal with a Loday algebroid.

\subsection{Courant algebroid}
Courant algebroids -- structures generalizing the Courant-Dorfman bracket on $\cT M$ -- were introduced as as
double objects for Lie bialgebroids by Liu, Weinstein and Xu \cite{LWX} in a bit complicated way. It was shown
by Roytenberg \cite{Roy0} that a Courant algebroid can be equivalently defined as a vector bundle $\zt:E\ra M$
with a Loday bracket on $\Sec(E)$, an anchor $\zr:E\ra \sT M$, and a symmetric nondegenerate inner product
$(\cdot,\cdot)$ on $E$, related by a set of four additional properties. It was further observed \cite{Uch,
GM2} that the number of independent conditions can be reduced.

\begin{definition} A \textit{Courant algebroid} is a vector bundle $\zt:E\ra M$ equipped with a Leibniz
bracket $[\cdot,\cdot]$ on $\Sec(E)$, a vector bundle map (over the identity) $\zr:E\ra \sT M$, and a
nondegenerate symmetric bilinear form (scalar product) $({\cdot}|{\cdot})$ on $E$ satisfying the identities
\bea\label{4} &\zr(X)( Y|Y)=2( X|[Y,Y]),\\ &\zr(X)( Y|Y)=2( [X,Y]|Y).\label{5} \eea
\end{definition}

\medskip\noindent Note that (\ref{4}) is equivalent to
\begin{equation}\label{4a}
\zr(X)( Y|Z)=( X|[Y,Z]+[Z,Y]).
\end{equation}
Similarly, (\ref{5}) easily implies the invariance of the pairing $({\cdot},{\cdot})$ with respect to the
adjoint maps
\begin{equation}\label{6}\zr(X)(Y|Z)=( [X,Y]|Z)+( Y|[X,Z]),
\end{equation}
which in turn shows that $\zr$ is the anchor map for the left multiplication: \be\label{zr}
[X,fY]=f[X,Y]+\zr(X)(f)Y\,. \ee Twisted Courant-Dorfman brackets are examples of Courant algebroid brackets
with $({\cdot},{\cdot})=\la\cdot,\cdot\ran_+$ as the scalar product. Defining a derivation
$\sD:\cim\ra\Sec(E)$ by means of the scalar product
\be\label{D}(\sD(f)|X)=\frac{1}{2}\zr(X)(f)\,, \ee we get out of
(\ref{4a}) that \be\label{4c}[Y,Z]+[Z,Y]=2\sD(Y|Z)\,. \ee This, combined with (\ref{zr}), implies in turn
\be\label{LAX} \za(Z)(\xd f\ot Y)=2(Y|Z)\sD(f)\,, \ee so any Courant algebroid is a Loday algebroid.
{
\subsection{Brackets associated with contact structures}
In \cite{G2}, contact (super)manifolds have been studied as symplectic principal $\R^\times$-bundles $(P,\zw)$; the symplectic form being homogeneous with respect to the $\R^\ti$-action. Similarly, Kirillov brackets on line bundles have been regarded as Poisson principal $\R^\times$-bundles. Consequently, {\em Kirillov algebroids} and {\em contact Courant algebroids} have been introduced, respectively, as homogeneous Lie algebroids and Courant algebroids on vector bundles equipped with a compatible
$\R^\times$-bundle structure. The corresponding brackets are therefore particular Lie algebroid and Courant algebroid brackets, thus Loday algebroid brackets. In other words, Kirillov and contact Courant algebroids are examples of Loday algebroids equipped additionally with some extra geometric structures.

As a canonical example of a contact Courant algebroid, consider the contact 2-manifold represented by the symplectic principal $\R^\times$-bundle $\sT^*[2]\sT[1](\R^\ti\ti M)$, for a purely even manifold $M$ \cite{G2}. As the cubic Hamiltonian $H$ associated with the canonical vector field on $\sT[1](\R^\ti\ti M)$ being the de Rham derivative is 1-homogeneous, we obtain a homogeneous Courant bracket on the linear principal $\R^\ti$-bundle $P=\sT(\R^\ti\ti M)\oplus_{\R^\ti\ti M}\sT^*(\R^\ti\ti M)$. It can be reduced to the vector bundle $E=(\R\ti\sT M)\oplus_M(\R^*\ti\sT^*M)$ whose sections are $(X,f)+(\za,g)$, where $f,g\in\C^\infty(M)$, $X$ is a vector field, and $\za$ is a one-form on $M$,
which is a Loday algebroid bracket of the form
\bea\label{edirac}
&[(X_1,f_1)+(\za_1,g_1),(X_2,f_2)+(\za_2,g_2)]=
\left([X_1,X_2]_{vf},X_1(f_2)-X_2(f_1)\right)\\
&+\left(\Li_{X_1}\za_2-i_{X_2}\xd\za_1+f_1\za_2- f_2\za_1
+f_2\xd g_1+g_2\xd f_1,
X_1(g_2)-X_2(g_1)+i_{X_2}\za_1+f_1g_2\right)\,.\nn
\eea
This is the Dorfman-like version of the bracket whose skew-symmetrization gives exactly the bracket introduced by Wade \cite{Wa0} to define so called {\it $\cE^1(M)$-Dirac structures} and considered also in \cite{GM2}.
The full contact Courant algebroid structure on $E$ consists additionally \cite{G2} of
the symmetric pseudo-Euclidean product
$$\la(X,f)+(\za,g),(X,f)+(\za,g)\ran=\la X,\za\ran+fg\,,$$
and
the vector bundle morphism $\zr^{\! 1}:E\ra\sT M\times\R$, corresponding to a map assigning to sections of $E$ first-order differential operators on $M$, of the form
$$\zr^{\! 1}\left((X,f)+(\za,g)\right)=X+f\,.$$

}

\subsection{Grassmann-Dorfman bracket}
The Dorfman bracket (\ref{CD}) can be immediately generalized to a bracket on sections of $\cT^\we M=\sT
M\oplus_M\wedge\sT^\ast M$, where
$$\wedge\sT^\ast M=\bigoplus_{k=0}^\infty\wedge^k\sT^\ast M\,,$$
so that the module of sections, $\Sec(\wedge\sT^\ast M)=\zW(M)=\bigoplus_{k=0}^\infty\zW^k(M)$, is the
Grassmann algebra of differential forms. The bracket, {\it Grassmann-Dorfman bracket}, is formally given by
the same formula $(\ref{CD})$ and the proof that it is a Loday algebroid bracket is almost the same. The left
anchor is the projection on the summand $\sT M$,
\be\label{CDa1}\zr:\sT M\oplus_M \wedge\sT^*M\ra \sT M\,,
\ee
and
$$\za(Y+\zh)(\xd f\ot(X+\zw))=\xd f\we (i_X\zh+i_Y\zw)=2\,\xd f\we\la X+\zw,Y+\zh\ran_+\,,$$
where
$$\la
X+\zw,Y+\zh\ran_+=\frac{1}{2}\left(i_X\zh+i_Y\zw\right)\,,$$ is a symmetric nondegenerate bilinear form on
$\cT^\we M$, this time with values in $\zW(M)$. Like for the classical Courant-Dorfman bracket, the graph of a
differential form $\zb$ is an isotropic subbundle in $\cT^\we M$ which is involutive (its sections are
closed with respect to the bracket) if and only if $\xd \zb=0$. The Grassmann-Dorfman bracket induces Loday
algebroid brackets on all bundles $\sT M\oplus_M\wedge^k\sT^\ast M$, $k=0,1,\dots,\infty$. These brackets have
been considered in \cite{Sh} and called there {\it higher-order Courant brackets} {(see also \cite{Za})}. Note that this is exactly
the bracket derived from the bracket of first-order (super)differential operators on the Grassmann algebra
$\zW(M)$: we associate with $X+\zw$ the operator $S_{X+\zw}=i_X+\zw\we\,$ and compute the super-commutators,
$$[[S_{X+\zw},\xd]_{sc},S_{Y+\zh}]_{sc}=S_{[X+\zw,Y+\zh]}\,.$$

\subsection{Grassmann-Dorfman bracket for a Lie algebroid}
All the above remains valid when we replace $\sT M$ with a Lie algebroid $(E,[\cdot,\cdot]_E,\zr_E)$, the de
Rham differential $\xd$ with the Lie algebroid cohomology operator $\xd^E$ on $\Sec(\we E^\ast)$, and the Lie
derivative along vector fields with the Lie algebroid Lie derivative $\Ll^E$. We define a bracket on sections
of $E\op_M\we E^\ast$ with formally the same formula
\be\label{CDA}[X+\zw,Y+\zh]=[X,Y]_{E}+\Ll_X^E\zh-i_Y\,\xd^E\zw\,.
\ee
This is a Loday algebroid bracket with the left anchor
$$\zr:E\op_M\we E^\ast\ra\sT M\,,\quad \zr(X+\zw)=\zr_E(X)$$
and
$$\za(Y+\zh)(\xd f\ot(X+\zw))=\xd^E f\we (i_X\zh+i_Y\zw)\,.$$

\subsection{Lie derivative bracket for a Lie algebroid}
The above Loday bracket on sections of $E\op_M\we E^\ast$ has a simpler version. Let us put simply
\be\label{CDA1}[X+\zw,Y+\zh]=[X,Y]_{E}+\Ll_X^E\zh\,.
\ee
This is again a Loday algebroid bracket with the same left anchor and and
$$\za(Y+\zh)(\xd f\ot(X+\zw))=\xd^E f\we i_X\zh+\zr_E(Y)(f)\zw\,.$$
In particular, when reducing to 0-forms, we get a Leibniz algebroid structure on $E\times \R$, where the
bracket is defined by $[X+f,Y+g]=[X,Y]_E+\zr_E(X)g$, the left anchor by $\zr(X,f)=\zr_E(X)$, and the generalized right
anchor by
$$b^r(Y,g)(\xd h\ot (X+f))=-\zr_E(Y)(h)X\,.$$
In other words,
$$\za(Y,g)(\xd h\ot (X+f))=\zr_E(Y)(h)f\,.$$

\subsection{Loday algebroids associated with a Nambu-Poisson structure}

In the following $M$ denotes a smooth $m$-dimensional manifold and $n$ is an integer such that $3\le n\le
m$. An almost Nambu-Poisson structure of order $n$ on $M$ is an $n$-linear bracket $\{\cdot,\ldots,\cdot\}$ on
$\cim$ that is skew-symmetric and has the Leibniz property with respect to the point-wise multiplication. It
corresponds to an $n$-vector field $\zL\in\zG(\we^n\sT M)$. Such a structure is Nambu-Poisson if it verifies
the {\em Filippov identity} ({\em generalized Jacobi identity}): \bea\label{J} &\{ f_1,\dots,f_{n-1},\{
g_1,\dots,g_n\}\}=
\{\{ f_1,\dots,f_{n-1},g_1\},g_2,\dots,g_n\}+\\
&\{ g_1,\{ f_1,\dots,f_{n-1},g_2\},g_3,\dots,g_n\}+\dots+ \{ g_1,\dots,g_{n-1},\{
f_1,\dots,f_{n-1},g_n\}\}\,,\nn \eea i.e., if the Hamiltonian vector fields $X_{f_1\ldots
f_{n-1}}=\{f_1,\ldots,f_{n-1},\cdot\}$ are derivations of the bracket. Alternatively, an almost Nambu-Poisson
structure is Nambu-Poisson if and only if
$$\Ll_{X_{f_1,\ldots,f_{n-1}}}\zL=0\,,$$
for all functions $f_1,\dots,f_{n-1}$.\medskip

Spaces equipped with skew-symmetric brackets satisfying the above identity have been introduced by Filippov
\cite{Fi} under the name {\it $n$-Lie algebras}.\medskip

The concept of Leibniz (Loday) algebroid used in \cite{ILMP} is the usual one, without differentiability
condition for the first argument. Actually, this example is a Loday algebroid in our sense as well. The
bracket is defined for $(n-1)$-forms by
$$[\zw,\zh]=\Ll_{\zr(\zw)}\zh+(-1)^n(i_{\xd\zw}\zL)\zh\,,$$
where
$$\zr:\we^{n-1}\sT^*M\ni\zw\mapsto i_{\zw}\zL\in \sT M$$
is actually the left anchor. Indeed,
$$[\zw,f\zh]=\Ll_{\zr(\zw)}f\zh+(-1)^n(i_{\xd\zw}\zL)f\zh=f[\zw,\zh]+\zr(\zw)(f)\zh\,.$$
For the generalized right anchor we get
$$[f\zw,\zh]=\Ll_{\zr(f\zw)}\zh+(-1)^n(i_{\xd(f\zw)}\zL)\zh=f[\zw,\zh]-i_{\zr(\zw)}(\xd f\we \zh)\,,$$
so
$$\za(\zh)(\xd f\ot\zw)=\zr(\zh)(f)\,\zw-\zr(\zw)(f)\,\zh+\xd f\wedge i_{\zr(\zw)}\zh\,.$$
Note that $\za$ is really a bundle map $\za:\we^{n-1}\sT^\ast M\to \sT M\ot_M\End(\we^{n-1}\sT^\ast M)$, since
it is obviously $C^{\infty}(M)$-linear in $\zh$ and $\zw$, as well as a derivation with respect to
$f.$\medskip

In \cite{Ha,HM}, another Leibniz algebroid associated with the Nambu-Poisson structure $\zL$ is proposed. The
vector bundle is the same, $E=\we^{n-1}\sT^\ast M$, the left anchor map is the same as well,
$\zr(\zw)=i_{\zw}\zL$, but the Loday bracket reads
$$[\zw,\zh]'=\Ll_{\zr(\zw)}\zh-i_{\zr(\zh)}\xd\zw\,.$$
Hence,
\beas[f\zw,\zh]'&=&\Ll_{\zr(f\zw)}\zh-i_{\zr(\zh)}\xd(f\zw)\\
&=&f[\zw,\zh]'-\zr(\zh)(f)\,\zw+\xd f\we(i_{\zr(\zw)}\zh+i_{\zr(\zh)}\zw)\,, \eeas so that for the generalized
right anchor we get
$$\za(\zh)(\xd f\ot\zw)=\xd f\we(i_{\zr(\zw)}\zh+i_{\zr(\zh)}\zw)\,.$$
This Loday algebroid structure is clearly the one obtained from the Grassmann-Dorfman bracket on the graph of
$\zL$,
$$\operatorname{graph}(\zL)=\{ \zr(\zw)+\zw:\zw\in\zW^{n-1}(M)\}\,.$$
Actually, an $n$-vector field $\zL$ is a Nambu-Poisson tensor if and only if its graph is closed with respect
to the Grassmann-Dorfman bracket {\cite{Sh,Ha}}.

\section{The Lie pseudoalgebra of a Loday algebroid}
Let us fix a Loday pseudoalgebra bracket $[\cdot,\cdot]$ on an $\cA$-module $\cE$. Let $\zr:\cE\ra \Der(\cA)$
be the left anchor map, and let
$$b^r=\zr-\za:\cE\ra\Der(\cA)\ot_\cA\End(\cE)$$
be the generalized right anchor map. For every $X\in\cE$ we will view $\za(X)$ as a $\cA$-module homomorphism
$\za(X):\zW^1\ot_\cA\cE\ra\cE$, where $\zW^1$ is the $\cA$-submodule of $\Hom_\cA(\cE;\cA)$ generated by
$\xd\cA=\{\xd f:f\in\cA\}$ and $\xd f(D)=D(f)$.

It is a well-known fact that the subspace $\mathfrak{g}^{0}$ generated in a Loday algebra $\mathfrak{g}$ by
the symmetrized brackets $X\diamond Y=[X,Y]+[Y,X]$ is a two-sided ideal and that $\mathfrak{g}/\mathfrak{g}^0$
is a Lie algebra. Putting
$$\cE^0=\operatorname{span}\{ [X,X]: X\in\cE\}\,,$$
we have then \be\label{ce} [\cE^0,\cE]=0\,,\quad [\cE,\cE^0]\subset\cE^0\,. \ee Indeed, symmetrized brackets
are spanned by squares $[X,X]$, so, due to the Jacobi identity,
$$[[X,X],Y]=[X,[X,Y]]-[X,[X,Y]]=0$$
and \be\label{dia}[Y,[X,X]]=[[Y,X],X]+[X,[Y,X]]=[X,Y]\diamond Y\,.
\ee However, working with $\cA$-modules, we would like to have an
$\cA$-module structure on $\cE/\cE^0$. Unfortunately, $\cE^0$ is not a submodule in general. Let us consider
therefore the $\cA$-submodule $\bcE$ of $\cE$ generated by $\cE^0$, i.e., $\bcE=\cA\cdot\cE^0$.
\begin{lem} For all $f\in\cA$ and $X,Y,Z\in\cE$ we have
\bea\label{fid0}\za(X)(\xd f\ot Y)&=&X\diamond(fY)-f(X\diamond Y)\,,\\
{[\za(X)(\xd f\ot Y),Z]}&=&{\zr(Z)(f)(X\diamond Y)-\za(Z)(\xd f\ot(X\diamond Y))\,.}\label{fid1} \eea In
particular,
\be\label{sy} [\za(X)(\xd f\ot Y),Z]=[\za(Y)(\xd f\ot X),Z]\,. \ee
\end{lem}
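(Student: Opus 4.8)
The plan is to read off both identities directly from the two Leibniz-type rules (\ref{aLqa1}) that characterise a Loday algebroid, using only the definition $X\diamond Y=[X,Y]+[Y,X]$ and the Jacobi identity (\ref{JI}).

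First I would prove (\ref{fid0}). Compute $X\diamond(fY)=[X,fY]+[fY,X]$ and apply the two rules in (\ref{aLqa1}): the first gives $[X,fY]=f[X,Y]+\zr(X)(f)Y$, while the second, applied to $[fY,X]$, gives $[fY,X]=f[Y,X]-\zr(X)(f)Y+\za(X)(\xd f\ot Y)$. Adding these, the two $\zr(X)(f)Y$ terms cancel and the $f$-linear terms combine to $f(X\diamond Y)$, leaving exactly $\za(X)(\xd f\ot Y)=X\diamond(fY)-f(X\diamond Y)$. This is the only place where one must be a little careful about which variable the $\za$-term attaches to, but it is forced by the statement of Theorem \ref{LodAld}.

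Next I would prove (\ref{fid1}). The idea is to take the bracket of (\ref{fid0}) with $Z$ on the right and use bilinearity together with the first relation in (\ref{aLqa1}), which is valid with $fY$ replaced by $f(\text{anything})$. Concretely, $[\za(X)(\xd f\ot Y),Z]=[X\diamond(fY),Z]-f[X\diamond Y,Z]$ by $\R$-bilinearity — wait, $f$ is not a scalar, so instead I must expand $[\,\cdot\,,Z]$ applied to each summand. We have $[X\diamond(fY),Z]=[[X,fY],Z]+[[fY,X],Z]$, and the key observation from (\ref{ce})–(\ref{dia}) is that $[\cE^0,\cE]=0$, so that the right multiplication $\ad^r_Z$ descends to the Lie algebra $\g/\g^0$; in particular $X\diamond Y\in\cE^0$ means $[\,\cdot\,,Z]$ sees $X\diamond Y$ only through its interaction with the module structure. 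More cleanly: apply $\ad^r_Z$ to both sides of (\ref{fid0}), i.e. bracket with $Z$ on the right. On the left we get $[\za(X)(\xd f\ot Y),Z]$. On the right we get $[X\diamond(fY),Z]-[f(X\diamond Y),Z]$. For the second term use (\ref{aLqa1}) (second rule, with $X$ there being $X\diamond Y$): $[f(X\diamond Y),Z]=f[X\diamond Y,Z]-\zr(Z)(f)(X\diamond Y)+\za(Z)(\xd f\ot(X\diamond Y))$. Since $X\diamond Y\in\cE^0$, equation (\ref{ce}) gives $[X\diamond Y,Z]=0$ and likewise $[X\diamond(fY),Z]=0$ because $X\diamond(fY)\in\cE^0$. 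Hence the whole right-hand side collapses to $\zr(Z)(f)(X\diamond Y)-\za(Z)(\xd f\ot(X\diamond Y))$, which is precisely (\ref{fid1}).

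Finally, (\ref{sy}) is immediate: by (\ref{fid0}), $\za(X)(\xd f\ot Y)-\za(Y)(\xd f\ot X)=\bigl(X\diamond(fY)-Y\diamond(fX)\bigr)-f\bigl(X\diamond Y-Y\diamond X\bigr)$, and since $\diamond$ is symmetric the bracketed differences are $[X,fY]+[fY,X]-[Y,fX]-[fX,Y]$ and $0$ respectively; expanding with (\ref{aLqa1}) one checks the first difference is $\za(X)(\xd f\ot Y)-\za(Y)(\xd f\ot X)$ consistently — more efficiently, simply observe that the right-hand side of (\ref{fid1}) is manifestly symmetric in the pair $(X,Y)$ entering only through $X\diamond Y=Y\diamond X$, so $[\za(X)(\xd f\ot Y),Z]=[\za(Y)(\xd f\ot X),Z]$.

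**Expected main obstacle.** The only delicate point is bookkeeping: making sure that when I bracket the $\cE^0$-valued element $\za(X)(\xd f\ot Y)=X\diamond(fY)-f(X\diamond Y)$ with $Z$ on the right, I correctly apply the \emph{right}-hand Leibniz rule from (\ref{aLqa1}) to the term $f(X\diamond Y)$ (which carries the function on the left), and that I invoke $[\cE^0,\cE]=0$ rather than $[\cE,\cE^0]\subset\cE^0$ — these are genuinely different, and only the former kills the unwanted terms. Everything else is a routine expansion.
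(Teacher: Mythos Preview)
Your proposal is correct and follows essentially the same route as the paper: combine the two Leibniz rules to get (\ref{fid0}), then bracket with $Z$ on the right and use $[\cE^0,\cE]=0$ to kill both $[X\diamond(fY),Z]$ and $[X\diamond Y,Z]$, leaving exactly (\ref{fid1}); the symmetry (\ref{sy}) is then immediate from the symmetry of $X\diamond Y$. The only cosmetic difference is that the paper works in the general Loday pseudoalgebra setting of Section~6 rather than invoking Theorem~\ref{LodAld}, but the formulas used are identical.
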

\begin{proof} To prove (\ref{fid0}) it suffices to combine the identity $[X,fY]=f[X,Y]+\zr(X)(f)(Y)$ with
$$[fY,X]=f[Y,X]-\zr(X)(f)Y+\za(X)(\xd f\ot Y)\,.$$
Then, as $[\cE^0,\cE]=0$,
$$[\za(X)(\xd f\ot Y),Z]=-[f(X\diamond Y),Z]={\zr(Z)(f)(X\diamond Y)-\za(Z)(\xd f\ot(X\diamond Y))\,.}$$
\end{proof}
\begin{cor}\label{cor1} For all $f\in\cA$ and $X,Y\in\cE$,
\be\label{fid} \za(X)(\xd f\ot Y)\in\bcE\,, \ee and the left
anchor vanishes on $\bcE$, \be\label{qan} \zr(\bcE)=0\,. \ee Moreover, $\bcE$ is a two-sided Loday ideal in
$\cE$ and the Loday bracket induces on the $\cA$-module $\bar\cE=\cE/\bcE$  a Lie pseudoalgebra structure with
the anchor
\be\label{qan1}\bar\zr([X])=\zr(X)\,, \ee where $[X]$ denotes the
coset of $X$.
\end{cor}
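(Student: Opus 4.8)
The plan is to establish the four assertions in order, using the preceding Lemma together with the anchor facts from the two pseudoalgebra theorems; the recurring point is that, although $\cE^0$ itself need not be an $\cA$-submodule, the enlarged object $\bcE=\cA\cdot\cE^0$ absorbs all the relevant correction terms.

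First I would read off (\ref{fid}) directly from (\ref{fid0}): since $\ka$ has characteristic $0$, every symmetrized bracket can be written as $X\diamond Z=[X+Z,X+Z]-[X,X]-[Z,Z]\in\cE^0$, so in (\ref{fid0}) the term $X\diamond(fY)$ lies in $\cE^0\subseteq\bcE$ while $f(X\diamond Y)\in\cA\cdot\cE^0=\bcE$; hence their difference $\za(X)(\xd f\ot Y)\in\bcE$. For (\ref{qan}) I would invoke that, for a Loday pseudoalgebra, the anchor is a homomorphism into the commutator bracket, so $\zr([X,X])=[\zr(X),\zr(X)]_c=0$; since $\zr$ is $\cA$-linear this forces $\zr$ to vanish on all of $\cA\cdot\cE^0=\bcE$.

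Next I would verify that $\bcE$ is a two-sided Loday ideal. As $\bcE$ is $\ka$-spanned by the elements $f[X,X]$ ($f\in\cA$, $X\in\cE$) and the bracket is $\ka$-bilinear, it suffices to bracket such a generator with an arbitrary $Z\in\cE$. On the left, the generalized right-anchor Leibniz rule gives
$$[f[X,X],Z]=f\,[[X,X],Z]-\zr(Z)(f)\,[X,X]+\za(Z)(\xd f\ot[X,X])\,;$$
by (\ref{ce}) the first term vanishes, the second lies in $\cA\cdot\cE^0=\bcE$, and the third lies in $\bcE$ by (\ref{fid}) (this line is just (\ref{fid1}) with $Y=X$). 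On the right, the left-anchor rule gives
$$[Z,f[X,X]]=f\,[Z,[X,X]]+\zr(Z)(f)\,[X,X]\,,$$
and $[Z,[X,X]]\in[\cE,\cE^0]\subseteq\cE^0$ by (\ref{ce}), so both summands lie in $\bcE$. Hence $[\bcE,\cE]\subseteq\bcE$ and $[\cE,\bcE]\subseteq\bcE$.

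Finally, since $\bcE$ is an $\cA$-submodule and a two-sided ideal for a bracket obeying (\ref{JI}), the bracket descends to a $\ka$-bilinear bracket on the $\cA$-module $\bar\cE=\cE/\bcE$ which again satisfies (\ref{JI}); it is moreover skew-symmetric because $[X,X]\in\cE^0\subseteq\bcE$ makes $[\bar X,\bar X]=0$, and a skew-symmetric Loday bracket is a Lie bracket. The left-anchor identity $[X,fY]=f[X,Y]+\zr(X)(f)Y$ passes to the quotient, and by (\ref{qan}) the formula $\bar\zr([X])=\zr(X)$ is a well-defined $\cA$-linear map $\bar\cE\to\Der(\cA)$ satisfying $[\bar X,f\bar Y]=f[\bar X,\bar Y]+\bar\zr(\bar X)(f)\bar Y$; combined with skew-symmetry this exhibits the quotient bracket as a bidifferential operator of total degree $\le1$ whose adjoint maps are derivative endomorphisms, i.e. $\bar\cE$ is a Lie pseudoalgebra with anchor $\bar\zr$. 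I expect the only step needing real care to be the closure computation in the previous paragraph, and specifically the term $\za(Z)(\xd f\ot[X,X])$: it need not lie in $\cE^0$, which is precisely why one must pass to $\bcE=\cA\cdot\cE^0$, and (\ref{fid}) is exactly what keeps it inside $\bcE$.
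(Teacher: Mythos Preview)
Your proof is correct and follows essentially the same route as the paper's: derive (\ref{fid}) from (\ref{fid0}), check the ideal property on generators $f[X,X]$ using the two anchor rules together with (\ref{ce}) and (\ref{fid}), and then pass to the quotient. The only minor difference is your argument for (\ref{qan}): you invoke the anchor-homomorphism property $\zr([X,X])=[\zr(X),\zr(X)]_c=0$, whereas the paper deduces $\zr(\cE^0)=0$ from $[\cE^0,\cE]=0$ (since $0=[[X,X],fY]-f[[X,X],Y]=\zr([X,X])(f)\,Y$ and $\cE$ is faithful); both arguments then extend to $\bcE=\cA\cdot\cE^0$ by $\cA$-linearity of $\zr$.
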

\begin{proof}
The first statement follows directly from (\ref{fid0}). As $[\cE^0,\cE]=0$, the anchor vanishes on $\cE^0$ and
thus on $\bcE=\cA\cdot\cE^0$. From
$$[Z,f(X\diamond Y)]=f[Z,X\diamond Y]+\zr(X)(f)(X\diamond Y)\in\bcE$$
and
$$[f(X\diamond Y),Z]=f[(X\diamond Y),Z]-\zr(Z)(f)(X\diamond Y)+\za(Z)(\xd f\ot(X\diamond Y))\in\bcE\,,$$
we conclude that $\bcE$ is a two-sided ideal. As $\bcE$ contains all elements $X\diamond Y$, The Loday bracket
induces on $\cE/\bcE$ a skew-symmetric bracket with the anchor (\ref{qan1}) and satisfying the Jacobi
identity, thus a Lie pseudoalgebra structure.
\end{proof}
\begin{definition} The Lie pseudoalgebra $\bar\cE=\cE/\bcE$ we will call the {\it Lie pseudoalgebra of the Loday pseudoalgebra $\cE$}.
If $\cE=\Sec(E)$ is the Loday pseudoalgebra of a Loday algebroid on a vector bundle $E$ and the module $\bcE$ is the module
of sections of a vector subbundle $\bar E$ of $E$, we deal with the {\it Lie algebroid of the Loday algebroid $E$}.
\end{definition}
\begin{example} The Lie algebroid of the Courant-Dorfman bracket is the canonical Lie algebroid $\sT M$.
\end{example}
\begin{thm} For any Loday pseudoalgebra structure on an $\cA$-module $\cE$ there is a short exact sequence of morphisms of
Loday pseudoalgebras over $\cA$,
\be\label{es}
0\longrightarrow\bcE\longrightarrow\cE\longrightarrow\bar\cE\longrightarrow 0\,, \ee where $\bcE$ -- the
$\cA$-submodule in $\cE$ generated by $\{[X,X]:X\in\cE\}$ -- is a Loday pseudoalgebra with the trivial left
anchor and $\bar\cE=\cE/\bcE$ is a Lie pseudoalgebra.
\end{thm}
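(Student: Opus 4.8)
The plan is to derive this statement as a repackaging of the Lemma and Corollary~\ref{cor1} proved just above; almost all of the content is already contained there, and what remains is bookkeeping. From Corollary~\ref{cor1} we already know that $\bcE=\cA\cdot\cE^0$ is a two-sided Loday ideal in $\cE$, that the left anchor vanishes on it, $\zr(\bcE)=0$ by~(\ref{qan}), and that the bracket induced on $\bar\cE=\cE/\bcE$ makes it a Lie --- hence, in particular, a Loday --- pseudoalgebra with anchor $\bar\zr$. So the two points still to be checked are: (i) that $\bcE$, equipped with the restricted bracket, is itself a Loday pseudoalgebra whose left anchor is trivial; and (ii) that the inclusion $\bcE\hookrightarrow\cE$ and the canonical projection $\cE\to\bar\cE$ are morphisms of Loday pseudoalgebras over $\cA$, i.e. $\cA$-linear maps preserving the brackets. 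Exactness of~(\ref{es}) as a sequence of $\cA$-modules will then be immediate.

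For step (i) I would argue as follows. Since $\bcE$ is an $\cA$-submodule closed under the bracket, the bracket restricts to a $\ka$-bilinear operation $\bcE\times\bcE\to\bcE$, and the Jacobi identity~(\ref{JI}) is inherited verbatim. The requirement that the bracket be a bidifferential operator of total degree $\le 1$, i.e.~(\ref{mdo1}), is likewise inherited, because the operators $\zd_i(f)$ applied to the restricted bracket are simply the restrictions of the corresponding operators applied to $B$, so their iterated vanishing persists. Finally, for $X\in\bcE$ one has $\zr(X)=0$, whence the left-anchor identity~(\ref{aLqa1}) degenerates to $[X,fY]=f[X,Y]$; thus $\ad_X$ is even $\cA$-linear on $\cE$, a fortiori on $\bcE$, hence trivially a derivative endomorphism with zero anchor. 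This establishes (i): $\bcE$ is a Loday pseudoalgebra with trivial left anchor.

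For step (ii) the inclusion $\iota:\bcE\hookrightarrow\cE$ is an injective $\cA$-module homomorphism that preserves brackets because $\bcE$ is a subalgebra, while the projection $\pi:\cE\to\bar\cE$ is a surjective $\cA$-module homomorphism that preserves brackets precisely because $\bcE$ is a two-sided ideal, the relation $\pi([X,Y])=[\pi(X),\pi(Y)]$ being the very definition of the quotient bracket. Exactness then amounts to $\operatorname{im}\iota=\bcE=\ker\pi$, which holds by construction, together with injectivity of $\iota$ and surjectivity of $\pi$. I do not anticipate a genuine obstacle: the only point that needs a moment's care is the stability of the differentiability conditions of Definition~\ref{def} under passing to a submodule and under taking a quotient by a submodule, and this is routine, since the defining operators $\zd_i(f)$ are natural with respect to both constructions.
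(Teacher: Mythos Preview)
Your proposal is correct and follows exactly the paper's approach: the theorem is stated without its own proof, being a direct repackaging of the preceding Lemma and Corollary~\ref{cor1}, and you have supplied precisely the routine verifications (closure of $\bcE$ under the restricted bracket, trivial left anchor from~(\ref{qan}), stability of the differentiability conditions under restriction and quotient, and the standard exactness check) that the paper leaves implicit.
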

Note that the Loday ideal $\cE^0$ is clearly commutative, while the modular ideal $\bcE$ is no longer
commutative in general.

\section{Loday algebroid cohomology}

We first recall the definition of the Loday cochain complex associated to a bi-module over a Loday algebra
\cite{LP}.\medskip

Let $\mathbb{K}$ be a field of nonzero characteristic and $V$ a $\mathbb{K}$-vector space endowed with a
(left) Loday bracket $[\cdot,\cdot]$. A {\it bimodule} over a Loday algebra $(V,[\cdot,\cdot])$ is a $\mathbb{K}$-vector space
$W$ together with a left (resp., right) {\it action} $\zm^l\in\sss{Hom}(V\otimes W,W)$ (resp.,
$\zm^r\in\sss{Hom}(W\otimes V,W)$) that verify the following requirements \be\label{VVW}
\zm^r[x,y]=\zm^r(y)\zm^r(x)+\zm^l(x)\zm^r(y),\ee \be\label{WVV}
\zm^r[x,y]=\zm^l(x)\zm^r(y)-\zm^r(y)\zm^l(x),\ee \be\label{VWV}
\zm^l[x,y]=\zm^l(x)\zm^l(y)-\zm^l(y)\zm^l(x),\ee for all $x,y\in V.$\medskip

The {\it Loday cochain complex} associated to the Loday algebra $(V,[\cdot,\cdot])$ and the bimodule $(W,\zm^l,\zm^r)$, shortly -- to $B=([\cdot,\cdot],\zm^r,\zm^l)$,
is made up by the cochain space $$\Lin^\bullet(V,W)=\bigoplus_{p\in \mathbb{N}}\, \Lin^p(V,W)=\bigoplus_{p\in
\mathbb{N}}\,\sss{Hom}(V^{\otimes p},W),$$ where we set $\Lin^0(V,W)=W$, and the coboundary operator $\partial_B$
defined, for any $p$-cochain $c$ and any vectors $x_1,\ldots,x_{p+1}\in V$, by
\bea\nn (\pa_B
c)(x_1,\ldots,x_{p+1})&=&(-1)^{p+1}\zm^r(x_{p+1})c(x_1,\ldots,x_p)+\sum_{i=1}^p(-1)^{i+1}\zm^l(x_i)c(x_1,\ldots\hat{\imath}\ldots,x_{p+1})\\
&&+\sum_{i<j}(-1)^{i}c(x_1,\ldots\hat{\imath}\ldots,\stackrel{(j)}{\overbrace{[x_i,x_j]}},\ldots,x_{p+1})\;\;.\label{LodCohOp}\eea

Let now $\zr$ be a {\it representation} of the Loday algebra $(V,[\cdot,\cdot])$ on a $\mathbb{K}$-vector space $W$,
i.e. a Loday algebra homomorphism $\zr: V\to \sss{End}(W)$. It is easily checked that $\zm^l:=\zr$ and
$\zm^r:=-\zr$ endow $W$ with a bimodule structure over $V$. Moreover, in this case of a bimodule induced by a
representation, the Loday cohomology operator reads

\bea\label{LodCohOpRepr}(\pa_B
c)(x_1,\ldots,x_{p+1})&=&\sum_{i=1}^{p+1}(-1)^{i+1}\zr(x_i)c(x_1,\ldots\hat{\imath}\ldots,x_{p+1})\\
&&
+\sum_{i<j}(-1)^ic(x_1,\ldots\hat{\imath}\ldots,\stackrel{(j)}{\overbrace{[x_i,x_j]}},\ldots,x_{p+1})\;\;.\nn
\eea

Note that the above operator $\pa_B$ is well defined if only the {\it
map} $\zr:V\to \sss{End}(W)$ and the {\it bracket}
$[\cdot,\cdot]:V\otimes V\to V$ are given. We will refer to it as
to the {\it Loday operator} associated with $B=([\cdot,\cdot],\zr)$.
The point is that $\pa_B^2=0$ if and only if $[\cdot,\cdot]$ is a
Loday bracket and $\zr$ is its representation. Indeed, the Loday
algebra homomorphism property of $\zr$ (resp., the Jacobi identity
for $[\cdot,\cdot]$) is encoded in $\pa_B^2=0$ on
$\sss{Lin}^0(V,W)=W$ (resp., $\sss{Lin}^1(V,W)$), at least if
$W\neq \{0\}$, what we assume).\medskip

{
Let now $E$ be a vector bundle over a manifold $M$ and $B=([\cdot,\cdot],\zr)$ be an {\it anchored
faint algebroid} structure on $E$, where $[\cdot,\cdot]$ is a
faint pseudoalgebra bracket (bidifferential operator) and $\zr:E\ra \sT M$ is
a vector bundle morphism covering the identity, so inducing a
module morphism $\zr:\sss{Sec}(E)\ra\Der(C^\infty(M))=\cX(M)$.
It is easy to see that, unlike in the case of a Lie algebroid,
the tensor algebra of sections of $\oplus_{k=0}^\infty(E^*)^{\otimes k}$ is, in general, not
invariant under the Loday cohomology operator $\pa_B$ associated with
$B=([\cdot,\cdot],\zr)$. Actually, $\pa_B$ rises the degree of a multidifferential operator by one, even when the Loday bracket is skew-symmetric (see e.g. \cite{Ldd,Ldd1}).

\begin{example} \cite{Ldd1}
Suppose that $M$ is a Riemannian manifold with metric tensor
$g$ and let $\pa_B$ be the Loday coboundary operator associated with the canonical
bracket of vector fields $B=([\cdot,\cdot]_{\text{vf}},\text{id}_{\sT M})$ on $E=\sT M$. {When adopting the conventions of \cite{Ldd1}, where the Loday differential associated to right Loday algebras is considered, we then get,} for all $X,Y,Z\in\X(M)$,
\be\label{LC}(\pa_B g)(X,Y,Z)=2g(Y,\nabla_XZ)\,,
\ee
where $\nabla$ is the Levi-Civita connection on M. One can say that the Loday differential of a Riemannian metric defines the corresponding Levi-Civita connection, which clearly is no longer a tensor on $M$.
\end{example}

The above observation suggests to consider in $\Lin^\bullet(\sss{Sec}(E),C^{\infty}(M))$, instead of {$\sss{Sec}(\otimes^{\bullet}E^*)$},
the subspace
$$\cD^\bullet(\sss{Sec}(E),C^{\infty}(M))\subset\Lin^\bullet(\sss{Sec}(E),C^{\infty}(M))$$
consisting of all multidifferential operators. If now $B=([\cdot,\cdot],\zr)$ is an {\it anchored
{faint} algebroid} structure on $E${, see above}, then it is clear that the space
{$\cD^\bullet(E):=\cD^\bullet(\sss{Sec}(E),C^{\infty}(M))$} is stable under the
Loday operator $\pa_B$ associated with
$B=([\cdot,\cdot],\zr)$.}

\medskip
In particular, if $([\cdot,\cdot],\zr,\za)$ is a Loday algebroid structure on
$E$, its left anchor $\zr:\sss{Sec}(E)\to
\sss{Der}(C^{\infty}(M))\subset \sss{End}(C^{\infty}(M))$ is a
representation of the Loday algebra $(\sss{Sec}(E),[\cdot,\cdot])$
by derivations on $C^{\infty}(M)$ and $\pa_B^2=0$, so $\pa_B$ is a
coboundary operator.

\begin{definition} Let $(E,[\cdot,\cdot],\zr,\za)$ be a Loday algebroid over a manifold $M$. We call {\it
Loday algebroid cohomology}, the cohomology of the Loday cochain
subcomplex {$(\cD^\bullet(E),\pa_B)$}
associated with $B=([\cdot,\cdot],\zr)$, i.e. the Loday algebra
structure $[\cdot,\cdot]$ on $\sss{Sec}(E)$ represented by $\zr$
on $C^{\infty}(M)$.\end{definition}

\section{Supercommutative geometric interpretation}

Let $E$ be a vector bundle over a manifold $M$.
{Looking for a canonical superalgebra structure in $\cD^\bullet(E)$, a natural candidate is the {\it shuffle (super)product}, introduced by Eilenberg and Mac Lane \cite{EML}
(see also \cite{R1,R2}). It is known that a shuffle algebra on a free associative algebra is a free commutative algebra with the Lyndon words as its free generators \cite{Ra}. A similar result is valid in the supercommutative case \cite{ZM}. In this sense the free shuffle superalgebra represents a supercommutative space.
}
\begin{definition} For any $\ell'\in
\cD^p(\sss{Sec}(E),C^{\infty}(M))$ and $\ell''\in \cD^q(\sss{Sec}(E),C^{\infty}(M))$,
$p,q\in\mathbb{N}$, we define the {\it shuffle product}
$$(\ell'\pitchfork
\ell'')(X_1,\ldots,X_{p+q}):=\sum_{\zs\in\ope{sh}(p,q)}\ope{sign}\zs\,\;
\ell'(X_{\zs_1},\ldots,X_{\zs_p})\;\ell''(X_{\zs_{p+1}},\ldots,X_{\zs_{p+q}}),$$ where the $X_i$-s denote
sections in $\sss{Sec}(E)$ and where $\ope{sh}(p,q)\subset \mathbb{S}_{p+q}$ is the subset of the symmetric
group $\mathbb{S}_{p+q}$ made up by all $(p,q)$-shuffles.\end{definition}

The next proposition is well-known.

\begin{prop} The space {$\cD^\bullet(E)$}, together with the shuffle multiplication $\pitchfork$, is a graded commutative associative unital $\R$-algebra.\end{prop}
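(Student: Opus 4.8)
The plan is to verify directly that the shuffle product $\pitchfork$ on $\cD^\bullet(\sss{Sec}(E),C^\infty(M))$ satisfies the three axioms of a graded commutative associative unital $\R$-algebra, treating the three properties in order of increasing bookkeeping effort. Throughout, the essential observation is that $\pitchfork$ is nothing but the classical shuffle product of cochains on a module with values in the commutative ring $C^\infty(M)$, so the whole content is combinatorial identities among shuffle permutations; the only extra point specific to our situation is that the subspace $\cD^\bullet$ of multidifferential operators is \emph{closed} under $\pitchfork$, which I should address first.

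\textbf{Stability of $\cD^\bullet$ under $\pitchfork$.} First I would check that if $\ell'\in\cD^p$ and $\ell''\in\cD^q$ then $\ell'\pitchfork\ell''\in\cD^{p+q}$. Fix an index $i\in\{1,\dots,p+q\}$ and all arguments except $X_i$; in each summand of the defining formula, the section $X_i$ occurs in exactly one of the two factors $\ell'(\cdots)$ or $\ell''(\cdots)$, and the other factor is then an element of $C^\infty(M)$ (a scalar with respect to the $X_i$-slot). Hence the map $X_i\mapsto(\ell'\pitchfork\ell'')(\dots,X_i,\dots)$ is a finite $C^\infty(M)$-linear combination of differential operators in $X_i$ — either of order $\le k$ coming from $\ell'$, or of order $\le k'$ coming from $\ell''$ — so it is again a differential operator. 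This uses only that $\cD_k(\cE_1;\cE_2)$ is a $C^\infty(M)$-module and that multiplication by a function preserves the order, which follows from the definitions in Section~2. Hence $\pitchfork$ is a well-defined binary operation $\cD^p\times\cD^q\to\cD^{p+q}$ and is $\R$-bilinear, which is immediate from the formula.

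\textbf{Unit and graded commutativity.} The unit is the element $1\in\cD^0(\sss{Sec}(E),C^\infty(M))=C^\infty(M)$, namely the constant function $1$: since $\ope{sh}(0,q)=\ope{sh}(p,0)=\{\ope{id}\}$ with trivial sign, $1\pitchfork\ell=\ell\pitchfork 1=\ell$ directly from the definition. For graded commutativity I would compare $(\ell'\pitchfork\ell'')(X_1,\dots,X_{p+q})$ with $(\ell''\pitchfork\ell')(X_1,\dots,X_{p+q})$: the bijection $\ope{sh}(p,q)\to\ope{sh}(q,p)$ sending a $(p,q)$-shuffle $\zs$ to the $(q,p)$-shuffle obtained by swapping the two blocks changes the sign exactly by $(-1)^{pq}$ (the number of transpositions needed to move a block of $p$ letters past a block of $q$ letters), and the values of $C^\infty(M)$ commute. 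This gives $\ell'\pitchfork\ell''=(-1)^{pq}\,\ell''\pitchfork\ell'$.

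\textbf{Associativity.} This is the step requiring the most care, though it is still purely combinatorial. For $\ell'\in\cD^p$, $\ell''\in\cD^q$, $\ell'''\in\cD^r$, I would show that both $(\ell'\pitchfork\ell'')\pitchfork\ell'''$ and $\ell'\pitchfork(\ell''\pitchfork\ell''')$ equal the ``triple shuffle'' sum
\[
\sum_{\zs\in\ope{sh}(p,q,r)}\ope{sign}\zs\;\ell'(X_{\zs_1},\dots)\,\ell''(X_{\zs_{p+1}},\dots)\,\ell'''(X_{\zs_{p+q+1}},\dots),
\]
where $\ope{sh}(p,q,r)\subset\mathbb{S}_{p+q+r}$ is the set of permutations increasing on each of the three consecutive blocks. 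The point is the standard fact that every $(p,q,r)$-shuffle factors uniquely as a $(p+q,r)$-shuffle followed (inside the first block) by a $(p,q)$-shuffle, and also uniquely as a $(p,q+r)$-shuffle followed by a $(q,r)$-shuffle on the last block, with signs multiplying correctly because the sign of a shuffle is determined by its inversions and inversions within disjoint blocks add up. Associativity of multiplication in $C^\infty(M)$ then finishes it. The main obstacle is purely organizational — keeping the index ranges and the sign of each nested shuffle straight — rather than conceptual; since this shuffle-associativity is exactly the classical fact underlying the Eilenberg–Zilber/shuffle-algebra formalism (and is the reason the proposition is ``well-known''), I would state the block-factorization lemma for shuffles and refer to the standard literature rather than rewriting the permutation calculation in full.
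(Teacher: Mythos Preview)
Your proposal is correct. The paper itself does not prove this proposition: it merely prefaces the statement by ``The next proposition is well-known'' and moves on. Your argument supplies exactly the standard combinatorial verification (unit, graded commutativity via the block-swap bijection $\ope{sh}(p,q)\leftrightarrow\ope{sh}(q,p)$, and associativity via the unique factorization of $(p,q,r)$-shuffles), which is the expected route and the reason the authors feel free to omit it.

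One point worth highlighting: the paper takes for granted that the shuffle product of two multidifferential operators is again multidifferential, whereas you explicitly address this stability. Your argument here is fine --- in each summand only one factor depends on the chosen slot $X_i$, and multiplying a differential operator by a function preserves the differential-operator property --- and it is the only part of the proof that is genuinely specific to the setting of $\cD^\bullet$ rather than to arbitrary multilinear maps. So in this respect your write-up is slightly more complete than what the paper offers.
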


We refer to this algebra as the {\it shuffle algebra of the vector bundle} $E\to M$, or simply, of $E$.

\medskip
Let $B=([\cdot,\cdot],\zr)$ be an anchored {faint} algebroid
structure on $E$ and let $\pa_B$ be the associated Loday operator
in $\cD^\bullet(E)$. Note that we would have $\pa_B^2=0$ if we had assumed that we deal with a Loday
algebroid.\medskip

Denote now by $\sD^k(E)$ those $k$-linear multidifferential
operators from $\cD^k(E)$ which are of degree 0 with respect to
the last variable and of total degree $\le k-1$, and set
{$\sD^\bullet(E)=\bigoplus_{k=0}^\infty\sD^k(E)$}. By convention,
$\sD^0(E)=\cD^0(E)=C^\infty(M)$. Moreover,
$\sD^1(E)=\Sec(E^\ast)$. It is easy to see that $\sD^\bullet(E)$
is stable for the shuffle multiplication. We will call the
subalgebra $(\sD^\bullet(E),\pitchfork)$, the {\it reduced shuffle
algebra}, and refer to the corresponding graded ringed space as
{\it supercommutative manifold}. Let us emphasize that this
denomination is in the present text merely a terminological
convention. The graded ringed spaces of the considered type are
being investigated in a separate work.

\begin{thm}\label{GeoIntKLA1} The coboundary operator $\pa_B$ is a degree 1
graded derivation of the shuffle algebra of $E$, i.e.
\be\pa(\ell'\pitchfork\ell'')=(\pa \ell')\pitchfork \ell''+(-1)^p
\ell'\pitchfork (\pa\ell''),\label{DerShuffle}\ee for any
$\ell'\in \cD^p(E)$ and $\ell''\in \cD^q(E)$. Moreover, if
$[\cdot,\cdot]$ is a pseudoalgebra bracket, i.e., if it is of
total order $\le 1$ and $\zr$ is the left anchor for
$[\cdot,\cdot]$, then $\pa_B$ leaves invariant the reduced shuffle
algebra $\sD^\bullet(E)\subset\cD^\bullet(E)$.\end{thm}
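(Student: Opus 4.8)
The plan is to prove the two assertions separately, the derivation property of $\pa_B$ first and the invariance of $\sD^\bullet(E)$ second, since the second will rely on the local form of the bracket and on a careful bookkeeping of differential orders.

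For the derivation property \eqref{DerShuffle}, I would work directly from the formula \eqref{LodCohOpRepr} for $\pa_B$ applied to the shuffle product $\ell'\pitchfork\ell''$, evaluated on sections $X_1,\dots,X_{p+q+1}$. Both sides are sums indexed by pairs consisting of a shuffle and either a ``derivation'' term $\zr(X_i)(\cdots)$ or a ``bracket'' term $[X_i,X_j]$. The key combinatorial identity is that every $(p+1,q)$-shuffle (resp.\ $(p,q+1)$-shuffle) of $\{1,\dots,p+q+1\}$ arises uniquely from inserting the distinguished index $i$ (the one carrying $\zr$) or the contracted pair $[X_i,X_j]$ into a $(p,q)$-shuffle of the remaining indices, and the sign $\ope{sign}\zs$ splits accordingly. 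I would make this precise by fixing the position of the argument acted upon and matching signs; the factor $(-1)^p$ on the right-hand side comes from moving the operator $\zr(X_i)$ or the bracket past the first $p$ arguments of $\ell'$ when the acted-upon index lands in the $\ell''$-block. This is the standard computation showing that the Loday coboundary is a graded derivation of the shuffle (Zinbiel-dual) product; it uses only that $\zr$ acts by derivations of $C^\infty(M)$ — which holds because $\zr$ takes values in $\cX(M)$ — and does not need $\pa_B^2=0$.

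For the second assertion I must show that if $\ell'\in\sD^p(E)$ and $\ell''\in\sD^q(E)$ then $\pa_B(\ell'\pitchfork\ell'')\in\sD^{p+q+1}(E)$; by the first part it suffices to show $\pa_B$ maps $\sD^k(E)$ into $\sD^{k+1}(E)$, because $\sD^\bullet(E)$ is already closed under $\pitchfork$. So let $\ell\in\sD^k(E)$: it is $k$-linear multidifferential, of degree $0$ in the last slot, and of total degree $\le k-1$. I must check $\pa_B\ell$ has degree $0$ in its last ($(k+1)$st) variable and total degree $\le k$. For the degree-$0$-in-the-last-slot claim I would plug $X_{k+1}=fY$ into \eqref{LodCohOpRepr}: the term $i=k+1$ is $(-1)^{k+2}\zr(fY)\ell(X_1,\dots,X_k)=(-1)^{k+2} f\,\zr(Y)\ell(X_1,\dots,X_k)$ using $\cA$-linearity of the left anchor (this is exactly $\zr(fX)=f\zr(X)$ from the first displayed theorem of Section 3); the terms $i\le k$ with $j\ne k+1$ are $\cA$-linear in $X_{k+1}$ since $\ell$ is of degree $0$ in that (last) slot; and the bracket terms with $j=k+1$, namely $(-1)^i\ell(\dots,[X_i,fY],\dots)$, expand via $[X_i,fY]=f[X_i,Y]+\zr(X_i)(f)Y$, and since $\ell$ has total degree $\le k-1$, the $\zr(X_i)(f)$-part still leaves $\ell$ applied to a tuple with one entry ($Y$) undifferentiated there — but here lies the subtlety, because that $Y$ sits in slot $k+1$ which is the degree-$0$ slot, so the $\zr(X_i)(f)$ correction is harmless and the whole thing is $\cA$-linear in the last argument up to lower-order terms that also respect degree $0$. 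Assembling, $\zd_{k+1}(f)(\pa_B\ell)=0$.

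The total-degree bound is the main obstacle and needs the local description most directly. I would argue that $\zd_{i_1}(f_1)\cdots\zd_{i_{k+1}}(f_{k+1})(\pa_B\ell)=0$ for all choices of slots $i_1,\dots,i_{k+1}\in\{1,\dots,k+1\}$ and functions $f_1,\dots,f_{k+1}$. Each of the $k+1$ operators $\zd_{i}(f)$ either hits a slot inside an occurrence of $\ell$ (which has total degree $\le k-1$, so $k$ such hits on $\ell$ already kill it), or hits the ``external'' factor $\zr(X_i)(\cdot)$ or a bracket $[X_i,X_j]$. The anchor factor $\zr(X_i)$ is a derivation, hence first order in that slot, so $\zd_i(f)^2$ annihilates it; the bracket $[X_i,X_j]$ is of total degree $\le 1$ by hypothesis, so at most one $\zd$ applied to the pair of slots $\{i,j\}$ survives. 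A counting argument — $k+1$ operators, each occurrence of $\ell$ absorbing at most $k-1$, the external anchor/bracket absorbing at most $1$ — then forces the product to vanish; I would organize this by cases on how many of the $i_t$ land in the ``external'' positions versus inside $\ell$. The delicate point to get right is the mixed terms where a $\zd$ hits the $X_j$ that is being contracted into a bracket and simultaneously that bracket's output feeds slot $j$ of $\ell$ — one must verify the orders add correctly and that the hypothesis ``total degree $\le k-1$'' for $\ell$ (rather than merely ``order $\le k-1$ in each slot'') is exactly what makes the count close. Once this is in place, $\pa_B\ell\in\sD^{k+1}(E)$, and together with the derivation property and closure of $\sD^\bullet(E)$ under $\pitchfork$ the theorem follows.
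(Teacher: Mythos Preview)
Your approach to the derivation property \eqref{DerShuffle} is essentially the paper's: both expand the two sides via \eqref{LodCohOpRepr}, split using the Leibniz rule for $\zr(X_i)$ acting on a product, and then match terms by exhibiting, for each term on one side, the unique shuffle on the other side that produces it with the correct sign.

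For the invariance of $\sD^\bullet(E)$, the paper simply says the claim is ``obvious'' and gives no argument, so your more detailed treatment is welcome. However, your check that $\pa_B\ell$ has degree $0$ in the last slot has a gap. You do not handle the derivation terms with $i\le k$ correctly: the expression $\zr(X_i)\,\ell(X_1,\ldots,\hat\imath,\ldots,X_k,fY)$ is \emph{not} $\cA$-linear in the last argument, because $\ell(\ldots,fY)=f\,\ell(\ldots,Y)$ and then the derivation $\zr(X_i)$ produces the extra term $(-1)^{i+1}\zr(X_i)(f)\,\ell(X_1,\ldots,\hat\imath,\ldots,X_k,Y)$. This extra term is precisely what cancels the anchor correction $(-1)^{i}\zr(X_i)(f)\,\ell(X_1,\ldots,\hat\imath,\ldots,X_k,Y)$ coming from the bracket term $[X_i,fY]=f[X_i,Y]+\zr(X_i)(f)Y$ at $j=k+1$; that correction is not ``harmless'' as you say, but essential for the cancellation. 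Once you make this cancellation explicit (signs $(-1)^{i+1}$ against $(-1)^i$), the degree-$0$ claim follows cleanly. Your total-degree counting argument is sound in outline, but the same mechanism is at work there: it is the pairwise cancellations between Leibniz-rule corrections from the $\zr(X_i)$-terms and anchor corrections from the $[X_i,X_j]$-terms, not a naive degree count term-by-term, that makes the bound close.
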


The claim is easily checked on low degree examples. The general proof is as follows.

\begin{proof} The value of the {\small LHS} of Equation (\ref{DerShuffle}) on sections $X_1,\ldots,X_{p+q+1}\in\sss{Sec}(E)$
is given by $S_1 +\ldots + S_4$, where
$$S_1=\sum_{k=1}^{p+1}\sum_{\zt\in\ope{sh}(p,q)}(-1)^{k+1}\ope{sign}\zt\; \zr(X_k)\left(\ell'(X_{\zt_1},\ldots,
\widehat{X}_{\zt_k},\ldots, X_{\zt_{p+1}})\;\ell''(X_{\zt_{p+2}},\ldots, X_{\zt_{p+q+1}})\right)$$
and
$$S_3=\sum_{1\le k< m\le p+q+1}\sum_{\zt\in \ope{sh}(p,q)}(-1)^k\ope{sign}\zt\;\,\ell'(X_{\zt_1},\ldots,[X_k,X_m],\ldots)
\;\ell''(X_{\zt_{-}},\ldots).$$
In the sum $S_2$, which is similar to $S_1$, the index $k$ runs through $\{p+2,\ldots,p+q+1\}$ ($X_{\zt_k}$ is
then missing in $\ell''$). The sum $S_3$ contains those shuffle permutations of $1\ldots \hat{k}\ldots p+q+1$
that send the argument $[X_k,X_m]$ with index $m=:\zt_r$ into $\ell'$, whereas $S_4$ is taken over the shuffle
permutations that send $[X_k,X_m]$ into $\ell''.$\medskip

Analogously, the value of $(\pa\ell')\pitchfork\ell''$ equals $T_1+T_2$ with
$$T_1=\sum_{\zs\in\ope{sh}(p+1,q)}\sum_{i=1}^{p+1}\ope{sign}\zs\,(-1)^{i+1}\left(\zr(X_{\zs_i})\,\ell'(X_{\zs_1},
\ldots,\widehat{X}_{\zs_i},\ldots,X_{\zs_{p+1}})\right)\ell''(X_{\zs_{p+2}},\ldots,X_{\zs_{p+q+1}})$$
and
$$T_2=\sum_{\zs\in\ope{sh}(p+1,q)}\sum_{1\le i<j\le p+1}\ope{sign}\zs\,(-1)^{i}\,\ell'(X_{\zs_1},\ldots,[X_{\zs_i},
X_{\zs_j}],\ldots)\;\ell''(X_{\zs_{p+2}},\ldots,X_{\zs_{p+q+1}})$$
(whereas the value $T_3+T_4$ of $(-1)^p\;\ell'\pitchfork(\pa\ell'')$, which is similar, is not (really) needed
in this (sketch of) proof).\medskip

Let us stress that in $S_3$ and $T_2$ the bracket is in its natural position determined by the index $\zt_r=m$
or $\zs_j$ of its second argument, that, since $\ope{sh}(p,q)\simeq \mathbb{S}_{p+q}/(\mathbb{S}_p\times
\mathbb{S}_q)$, the number of $(p,q)$-shuffles equals ${(p+q)!}/{(p!\,q!)}\,$, and that in $S_1$ the vector field
$\zr(X_k)$ acts on a product of functions according to the Leibniz rule, so that each term splits. It is now
easily checked that after this splitting the number of different terms in $\zr(X_{-})$ (resp. $[X_-,X_-]$) in
the {\small LHS} and the {\small RHS} of Equation (\ref{DerShuffle}) is equal to $2 (p+q+1)!/(p!\,q!)$ (resp.
$(p+q)(p+q+1)!/(2\,p!\,q!)$). To prove that both sides coincide, it therefore suffices to show that any term of the
{\small LHS} can be found in the {\small RHS}.\medskip

We first check this for any split term of $S_1$ with vector field action on the value of $\ell'$ (the proof is
similar if the field acts on the second function and also if we choose a split term in $S_2$),
$$(-1)^{k+1}\ope{sign}\zt\; \left(\zr(X_k)\ell'(X_{\zt_1},\ldots, \widehat{X}_{\zt_k},\ldots,
X_{\zt_{p+1}})\right)\,\ell''(X_{\zt_{p+2}},\ldots, X_{\zt_{p+q+1}}),$$ where $k\in\{1,\ldots,p+1\}$ is fixed,
as well as $\zt\in\ope{sh(p,q)}$ -- which permutes $1\ldots\hat{k}\ldots p+q+1$. This term exists also in
$T_1$. Indeed, the shuffle $\zt$ induces a unique shuffle $\zs\in\ope{sh}(p+1,q)$ and a unique
$i\in\{1,\ldots,p+1\}$ such that $\zs_i=k.$ The corresponding term of $T_1$ then coincides with the chosen
term in $S_1$, since, as easily seen, $\ope{sign}\zs\;(-1)^{i+1}=(-1)^{k+1}\ope{sign}\zt$.\medskip

Consider now a term in $S_3$ (the proof is analogous for the terms of $S_4$),
$$(-1)^k\ope{sign}\zt\;\,\ell'(X_{\zt_1},\ldots,[X_k,X_m],\ldots)\;\ell''(X_{\zt_{-}},\ldots),$$
where $k<m$ are fixed in $\{1,\ldots, p+q+1\}$ and where $\zt\in \ope{sh}(p,q)$ is a fixed permutation of
$1\ldots\hat{k}\ldots p+q+1$ such that the section $[X_k,X_m]$ with index $m=:\zt_r$ is an argument of
$\ell'$. The shuffle $\zt$ induces a unique shuffle $\zs\in\ope{sh}(p+1,q)$. Set $k=:\zs_i$ and $m=:\zs_j$. Of
course $1\le i<j\le p+1$. This means that the chosen term reads
$$(-1)^k\ope{sign}\zt\;\,\ell'(X_{\zs_1},\ldots,[X_{\zs_i},X_{\zs_j}],\ldots,X_{\zs_{p+1}})\;\ell''(X_{\zs_{p+2}},\ldots,X_{\zs_{p+q+1}}).$$
Finally this term is a term of $T_2$, as it is again clear that
$(-1)^k\ope{sign}\zt=\ope{sign}\zs\,(-1)^i$.\medskip

That $\sD^\bullet(E)$ is invariant under $\pa_B$ in the case of a
pseudoalgebra bracket is obvious. This completes the
proof.\end{proof}

Note that the derivations $\pa_B$ of the reduced shuffle algebra
(in the case of pseudoalgebra brackets on $\Sec(E)$) are, due to
formula (\ref{LodCohOpRepr}), completely determined by their
values on $\sD^0(E)\oplus\sD^1(E)$. More precisely,
$B=([\cdot,\cdot],\zr)$ can be easily reconstructed from $\pa_B$
thanks to the formulae \be\label{anchor-reconstruction}
\zr(X)(f)=\la X,\pa_Bf\ran \ee and
\be\label{bracket-reconstruction} \la\mathfrak{l},[X,Y]\ran=\la
X,\pa_B\la\mathfrak{l},Y\ran\ran-\la
Y,\pa_B\la\mathfrak{l},X\ran\ran-\pa_B\mathfrak{l}(X,Y)\,, \ee
where $X,Y\in\Sec(E)$, $\mathfrak{l}\in\Sec(E^\ast)$, and $f\in
C^\infty(M)$.

\begin{thm}\label{th:LO} If $\pa$ is a derivation of the reduced shuffle algebra $\sD^\bullet(E)$, then on $\sD^0(E)\oplus\sD^1(E)$ the
derivation $\pa$ coincides with $\pa_B$ for a certain uniquely determined
$B=([\cdot,\cdot]_\pa,\zr_\pa)$ associated with a pseudoalgebra
bracket $[\cdot,\cdot]_\pa$ on $\Sec(E)$.
\end{thm}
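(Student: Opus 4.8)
The plan is to reverse-engineer the bracket and anchor from the derivation $\pa$ using formulas (\ref{anchor-reconstruction}) and (\ref{bracket-reconstruction}), and then verify that the reconstructed data actually satisfy the required differentiability properties (total order $\le 1$, left anchor is a bundle map into $\sT M$), so that we indeed obtain a pseudoalgebra bracket in the sense of Definition \ref{def}.

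First I would \emph{define} $\zr_\pa$ and $[\cdot,\cdot]_\pa$ by exactly those formulas: for $f\in C^\infty(M)=\sD^0(E)$ and $X\in\Sec(E)$, set $\zr_\pa(X)(f):=\la X,\pa f\ran$, noting that $\pa f\in\sD^1(E)=\Sec(E^\ast)$, so this pairing makes sense. Then for $X,Y\in\Sec(E)$ and $\mathfrak{l}\in\Sec(E^\ast)=\sD^1(E)$, define $\la\mathfrak{l},[X,Y]_\pa\ran$ by the right-hand side of (\ref{bracket-reconstruction}); here one must check that $\pa\mathfrak{l}\in\sD^2(E)$, which holds because $\pa$ has degree $1$ and preserves the reduced shuffle algebra, so $\pa\mathfrak{l}(X,Y)$ is a well-defined function, and the other two terms involve $\pa$ applied to elements of $\sD^0(E)$. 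One checks $[X,Y]_\pa$ is a well-defined section by verifying $C^\infty(M)$-linearity of the right-hand side in $\mathfrak{l}$ and using faithfulness/nondegeneracy of the pairing between $\Sec(E)$ and $\Sec(E^\ast)$.

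The core of the argument is then to extract the differentiability properties from the fact that $\pa$ is a graded derivation of $\pitchfork$. The key computation: for $g\in C^\infty(M)$, the Leibniz rule $\pa(g\pitchfork\mathfrak{l})=(\pa g)\pitchfork\mathfrak{l}+g\cdot\pa\mathfrak{l}$ (with the appropriate sign, $p=0$ so no sign) together with the fact that $\pa g\in\sD^1(E)$ lets me compute $\zd_i(g)$-type commutators of $B$ and see they reduce to order zero. Concretely, plugging $g\mathfrak{l}$ and $gX$, $gY$ into (\ref{bracket-reconstruction}) and expanding, the derivation property forces $[X,gY]_\pa-g[X,Y]_\pa=\zr_\pa(X)(g)Y$ and a companion identity for the first slot, exhibiting $B$ as a bidifferential operator of total degree $\le 1$ with left anchor $\zr_\pa$. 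Finally, $\zd(g)(\zr_\pa(\cdot)(h))=0$ shows $X\mapsto\zr_\pa(X)$ is $C^\infty(M)$-linear, hence comes from a bundle morphism $E\to\sT M$; that it lands in derivations is automatic since $\pa$ on $\sD^0(E)=C^\infty(M)$ composed with pairing against a section is a vector field. Uniqueness is immediate: any $B'$ with $\pa_{B'}=\pa$ on $\sD^0\oplus\sD^1$ must satisfy (\ref{anchor-reconstruction})–(\ref{bracket-reconstruction}), which determine $B'$ completely.

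The main obstacle I expect is the bookkeeping in the middle step: carefully expanding $\pa$ of a shuffle product of a function with a $1$-cochain and with higher cochains, and matching the resulting terms against the explicit form (\ref{LodCohOpRepr}) of $\pa_B$ to confirm that the derivation property is \emph{equivalent} to $B$ being of total order $\le 1$ with the stated anchor — rather than merely implied by it. One has to be attentive that $\pa g$ genuinely lies in $\sD^1(E)$ (degree $0$ in the single variable, total degree $\le 0$), which is where the hypothesis that $\pa$ preserves the \emph{reduced} shuffle algebra, not just the full one, is essential; without it $\pa g$ could be a first-order operator and $\zr_\pa$ would fail to be a bundle map, exactly as in Example \ref{e1}'s Poisson-bracket phenomenon. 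Everything else is routine manipulation of shuffles and signs of the kind already carried out in the proof of Theorem \ref{GeoIntKLA1}.
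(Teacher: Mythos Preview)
Your proposal is correct and follows essentially the same route as the paper: define $\zr_\pa$ and $[\cdot,\cdot]_\pa$ via the reconstruction formulas, then extract the differentiability conditions from the hypothesis that $\pa$ preserves the reduced shuffle algebra and is a graded derivation. The only minor difference is one of emphasis: where you plan to work through the Leibniz rule $\pa(g\pitchfork\mathfrak{l})=(\pa g)\pitchfork\mathfrak{l}+g\,\pa\mathfrak{l}$ explicitly, the paper reads the needed order properties directly off the definition of $\sD^2(E)$ (namely that $\pa\mathfrak{l}$ is of order $0$ in the last variable and of total order $\le 1$), which shortens the verification of the left-anchor identity and of total order $\le 1$; your remark that ``$\zr_\pa(X)$ lands in derivations automatically'' is the one place you should be careful to actually invoke the derivation property $\pa(fg)=f\,\pa g+g\,\pa f$ rather than treat it as free.
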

\begin{proof}
Let us define $\zr=\zr_\pa$ and $[\cdot,\cdot]=[\cdot,\cdot]_\pa$ out of formulae
(\ref{anchor-reconstruction}) and (\ref{bracket-reconstruction}),
i.e., \be\label{anchor-reconstruction1} \zr(X)(f)=\la X,\pa
f\ran \ee and \be\label{bracket-reconstruction1}
\la\mathfrak{l},[X,Y]\ran=\la
X,\pa\la\mathfrak{l},Y\ran\ran-\la
Y,\pa\la\mathfrak{l},X\ran\ran-\pa\mathfrak{l}(X,Y)\,. \ee The
fact that $\zr(X)$ is a derivation of $C^{\infty}(M)$ is a
direct consequence of the shuffle algebra derivation property of
$\pa$. Eventually, the map $\zr$ is visibly associated with
a bundle map $\zr:E\to \sss{T}M$.\smallskip

The bracket $[\cdot,\cdot]$ has $\zr$ as left anchor.
Indeed, since $\pa\mathfrak{l}(X,Y)$ is of order 0 with respect to
$Y$, we get from (\ref{bracket-reconstruction1})
$$[X,fY]-f[X,Y]= \la X,\pa f\ran Y=\zr(X)(f)Y\,.$$
Similarly, as $\pa\mathfrak{l}(X,Y)$ is of order 1 with respect to $X$ and of order 0 with respect to $Y$, the operator
$$\zd_1(f)\left(\pa\mathfrak{l}\right)(X,Y)=\pa\mathfrak{l}(fX,Y)-f\pa\mathfrak{l}(X,Y)$$
is $C^\infty(M)$-bilinear, so that the LHS of
$$\la\mathfrak{l},[fX,Y]-f[X,Y]\ran=-\la Y,\pa f\ran\la\mathfrak{l},X\ran -\zd_1(f)\left(\pa\mathfrak{l}\right)(X,Y),$$ see (\ref{bracket-reconstruction1}),
is $C^\infty(M)$-linear with respect to $X$ and $Y$ and a
derivation with respect to $f$. The bracket $[\cdot,\cdot]$ is
therefore of total order $\le 1$ with the generalized right anchor
$b\,^r=\zr-\za$, where $\za$ is determined by
the identity \be\label{za}\la\mathfrak{l},\za(Y)(\xd f\ot
X)\ran=\zd_1(f)\left(\pa\mathfrak{l}\right)(X,Y)\,. \ee This
corroborates that $\za$ is a bundle map from $E$ to
$\sss{T}M\otimes_M\sss{End}(E)$.\end{proof}

\begin{definition} Let ${\ope{Der}}_1(\sD^\bullet(E),\pitchfork)$ be the space of degree $1$ graded
derivations $\pa$ of the reduced shuffle algebra that verify, for any $c\in \sD^2(E)$ and any
$X_i\in\sss{Sec}(E)$, $i=1,2,3$,
\bea\label{EncodingJacobi}(\pa c)(X_1,X_2,X_3)&=&\sum_{i=1}^3(-1)^{i+1}\la\pa
(c(X_1,\ldots\hat{\imath}\ldots,X_3)),X_i\rangle\\
&&+\sum_{i<j}(-1)^i\,c(X_1,\ldots\hat{\imath}\ldots,\stackrel{(j)}
{[X_i,X_j]_\pa},\ldots,X_3)\,.\nn
\eea
A {\it homological vector field} of the supercommutative manifold $(M,\sD^\bullet(E))$ is a square-zero
derivation in $\ope{{Der}}_1(\sD^\bullet(E),\pitchfork)$. Two homological vector fields of $(M,\sD^\bullet(E))$ are {\it equivalent}, if they coincide on $C^{\infty}(M)$ and on
$\sss{Sec}(E^*)$.
\end{definition}

\medskip
Observe that Equation (\ref{EncodingJacobi}) implies that two equivalent homological fields also coincide on
$\sD^2(E)$.
We are now prepared to give the main theorem of this section.

\begin{thm}\label{GeoIntKLA2} Let $E$ be a vector bundle. There exists a 1-to-1 correspondence between equivalence classes of homological vector
fields
$$\pa\in\ope{{Der}}_1(\sD^\bullet(E),\pitchfork),\;\pa^2=0$$
and Loday algebroid structures on $E$.\end{thm}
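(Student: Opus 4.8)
The plan is to set up the correspondence in both directions and check they are mutually inverse, relying heavily on Theorems \ref{th:LO} and \ref{GeoIntKLA1}. Given a Loday algebroid structure $([\cdot,\cdot],\zr,\za)$ on $E$, its left anchor $\zr$ is a representation of the Loday algebra $(\Sec(E),[\cdot,\cdot])$ by derivations of $C^\infty(M)$, so the associated Loday operator $\pa_B$ with $B=([\cdot,\cdot],\zr)$ satisfies $\pa_B^2=0$; by Theorem \ref{GeoIntKLA1} (its second half, applying since a Loday algebroid bracket is in particular a pseudoalgebra bracket) $\pa_B$ is a degree $1$ graded derivation of the reduced shuffle algebra $\sD^\bullet(E)$. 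Formula (\ref{LodCohOpRepr}) on $\sD^2(E)$ is precisely (\ref{EncodingJacobi}), so $\pa_B\in\ope{{Der}}_1(\sD^\bullet(E),\pitchfork)$ and is a homological vector field. Conversely, starting from a homological vector field $\pa$, Theorem \ref{th:LO} produces a uniquely determined pseudoalgebra bracket $[\cdot,\cdot]_\pa$ with left anchor $\zr_\pa$ and generalized right anchor $b^r=\zr_\pa-\za_\pa$, i.e. a Loday \emph{quasi} algebroid with the adjoint $\ad_X$ a derivative endomorphism; what remains is to see that $[\cdot,\cdot]_\pa$ satisfies the Jacobi identity, so that it is a genuine Loday algebroid in the sense of Definition \ref{d1}.

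The Jacobi identity is the main obstacle and the heart of the argument. The point, already flagged in the text after (\ref{LodCohOpRepr}), is that $\pa^2=0$ encodes exactly the Loday conditions: vanishing of $\pa^2$ on $\sD^0(E)=C^\infty(M)$ forces $\zr_\pa$ to be a homomorphism of $[\cdot,\cdot]_\pa$ into the commutator bracket of vector fields, and vanishing of $\pa^2$ on $\sD^1(E)=\Sec(E^\ast)$ forces the Jacobi identity (\ref{JI}) for $[\cdot,\cdot]_\pa$. Concretely, I would compute $\pa^2\mathfrak{l}$ for $\mathfrak{l}\in\Sec(E^\ast)$ evaluated on three sections $X_1,X_2,X_3$ using (\ref{EncodingJacobi}) together with the reconstruction formulae (\ref{anchor-reconstruction1}), (\ref{bracket-reconstruction1}) for $\pa$ on $\sD^0\oplus\sD^1$; the graded-derivation property of $\pa$ lets one expand $\pa\langle\mathfrak{l},X\rangle$ and $\pa(\mathfrak{l}(X,Y))$ in terms of $\zr_\pa$ and $[\cdot,\cdot]_\pa$. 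Collecting terms, the coefficient of $\mathfrak{l}$ contracted appropriately is the Jacobiator $[X_1,[X_2,X_3]]-[[X_1,X_2],X_3]-[X_2,[X_1,X_3]]$ (up to signs), while the remaining terms, which involve $\zr_\pa$, cancel by the $\sD^0$-level identity already established; since $E^\ast$ separates points (faithfulness of the module $\Sec(E)$), vanishing of $\pa^2$ on $\sD^1$ gives the Jacobi identity. This is the one genuinely computational step; it is the direct analogue of the classical fact that a degree $1$ square-zero derivation on $\Sec(\wedge E^\ast)$ is a Lie algebroid differential, and I expect the bookkeeping of shuffle signs to be the only delicate part.

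Finally I would verify that the two constructions are mutually inverse at the level of equivalence classes. In one direction: if $\pa$ is a homological vector field and $([\cdot,\cdot]_\pa,\zr_\pa,\za_\pa)$ the associated Loday algebroid, then $\pa_{B}$ with $B=([\cdot,\cdot]_\pa,\zr_\pa)$ agrees with $\pa$ on $C^\infty(M)$ and on $\Sec(E^\ast)$ by the very formulae (\ref{anchor-reconstruction}), (\ref{bracket-reconstruction}) that define $\zr_\pa$ and $[\cdot,\cdot]_\pa$ — both are degree $1$ derivations of $\sD^\bullet(E)$ satisfying (\ref{EncodingJacobi}), hence equal by the remark that such derivations are determined by their restriction to $\sD^0\oplus\sD^1$ — so $\pa_B$ is equivalent to $\pa$. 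In the other direction: starting from a Loday algebroid, passing to $\pa_B$ and then back via Theorem \ref{th:LO} returns the same anchor $\zr$ (from (\ref{anchor-reconstruction})) and the same bracket (from (\ref{bracket-reconstruction}), using that $\pa_B$ applied to a linear function $\langle\mathfrak{l},Y\rangle$ reproduces $\zr$ and $[\cdot,\cdot]$ through the definition (\ref{LodCohOpRepr}) of the Loday operator); moreover $\za$ is recovered from (\ref{za}). Since a Loday algebroid is uniquely determined by $([\cdot,\cdot],\zr)$ — indeed $\za$ is then forced by (\ref{aLqa1}), and $\zr$ by the left-anchor condition — these two assignments are inverse bijections, which establishes the claimed 1-to-1 correspondence.
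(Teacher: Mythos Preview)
Your proposal is correct and follows essentially the same route as the paper: both directions rest on Theorems \ref{GeoIntKLA1} and \ref{th:LO}, and the Jacobi identity is extracted from $(\pa^2\mathfrak{l})(X_1,X_2,X_3)=0$ by expanding the three double brackets via (\ref{bracket-reconstruction1}), using (\ref{anchor-reconstruction1}) and $\pa^2=0$, and invoking (\ref{EncodingJacobi}).

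One small overreach worth flagging: in your bijection check you assert that $\pa_B$ and $\pa$ are ``hence equal by the remark that such derivations are determined by their restriction to $\sD^0\oplus\sD^1$.'' That remark in the paper refers to Loday operators $\pa_B$, which obey the explicit formula (\ref{LodCohOpRepr}); it does not say that an arbitrary element of $\ope{Der}_1(\sD^\bullet(E),\pitchfork)$ is determined by its restriction to $\sD^0\oplus\sD^1$ (the reduced shuffle algebra is not obviously generated in those degrees). Fortunately only \emph{equivalence} is needed, and that is precisely agreement on $\sD^0\oplus\sD^1$, which you already have from the reconstruction formulae---so your conclusion ``$\pa_B$ is equivalent to $\pa$'' stands and the claimed 1-to-1 correspondence with equivalence classes follows.
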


\begin{remark} This theorem is a kind of a non-antisymmetric counterpart of
the well-known similar correspondence between homological vector fields of split supermanifolds and Lie
algebroids. Furthermore, it may be viewed as an analogue for Loday algebroids of the celebrated
Ginzburg-Kapranov correspondence for quadratic Koszul operads \cite{GK}. According to the latter result,
homotopy Loday structures on a graded vector space $V$ correspond bijectively to degree 1 differentials of the
Zinbiel algebra $(\bar{\otimes}sV^*,\star)$, where $s$ is the suspension operator and where
$\bar{\otimes}sV^*$ denotes the reduced tensor module over $sV^*.$ However, in our geometric setting scalars,
or better functions, must be incorporated (see the proof of Theorem \ref{GeoIntKLA2}), which turns out to be
impossible without passing from the Zinbiel multiplication or half shuffle $\star$ to its symmetrization
$\pitchfork$. Moreover, it is clear that the algebraic structure on the function sheaf should be
associative.\end{remark}

\begin{proof} Let $([\cdot,\cdot],\zr,\za)$ be a Loday algebroid
structure on the given vector bundle $E\to M.$ According to
Theorem \ref{GeoIntKLA1}, the corresponding coboundary operator
$\partial_B$ is a square 0 degree 1 graded derivation of the
reduced shuffle algebra and (\ref{EncodingJacobi}) is satisfied by
definition, as $[\cdot,\cdot]_{\pa_B}=[\cdot,\cdot]$.\medskip

Conversely, let $\pa$ be such a homological vector field.
According to Theorem \ref{th:LO}, the derivation $\pa$ coincides
on $\sD^0(E)\oplus\sD^1(E)$ with $\pa_B$ for a certain
pseudoalgebra bracket $[\cdot,\cdot]=[\cdot,\cdot]_\pa$ on
$\Sec(E)$. Its left anchor is $\zr=\zr_\pa$ and the generalized
right anchor $b^r=\zr-\za$ is determined by means of formula
(\ref{za}), where $\mathfrak{l}$ runs through all sections of
$E^\ast$.\medskip

To prove that the triplet $([\cdot,\cdot],\zr,\za)$ defines a Loday algebroid structure on $E,$ it now suffices to
check that the Jacobi identity holds true. It follows from (\ref{bracket-reconstruction1}) that
$$\la\mathfrak{l},[X_1,[X_2,X_3]]\ran=-\la\pa\la \mathfrak{l},X_1\rangle, [X_2,X_3]\rangle+\la\pa\la
\mathfrak{l},[X_2,X_3]\rangle, X_1\rangle-(\pa\mathfrak{l})(X_1,[X_2,X_3]).$$ Since the first term of the
{\small RHS} is (up to sign) the evaluation of $[X_2,X_3]$ on the section $\pa\la \mathfrak{l},X_1\rangle$ of
$E^*$, and a similar remark is valid for the contraction $\la \mathfrak{l},[X_2,X_3]\rangle$ in the second
term, we can apply (\ref{bracket-reconstruction1}) also to these two brackets. If we proceed analogously for
$[[X_1,X_2],X_3]$ and $[X_2,[X_1,X_3]]$, and use (\ref{anchor-reconstruction1}) and the homological property
$\pa^2=0$, we find, after simplification, that the sum of the preceding three double brackets equals
$$\sum_{i=1}^3(-1)^{i+1}
\zr(X_i)(\pa\mathfrak{l})(X_1,\ldots\hat{\imath}\ldots,X_3)+\sum_{i<j}(-1)^i\,(\pa\mathfrak{l})(X_1,\ldots\hat{\imath}\ldots,\stackrel{(j)}
{\overbrace{[X_i,X_j]}},\ldots,X_3)\;.
$$
In view of (\ref{EncodingJacobi}), the latter expression coincides with $(\pa^2\mathfrak{l})(X_1,X_2,X_3)=0$, so that
the Jacobi identity holds.\medskip

It is clear that the just detailed assignment of a Loday algebroid
structure to any homological vector field can be viewed as a map
on equivalence classes of homological vector fields.\end{proof}

Having a homological vector field $\pa$ associated with a Loday
algebroid structure $([\cdot,\cdot],\zr,\za)$ on $E$, we can
easily develop the corresponding Cartan calculus for the shuffle
algebra $\cD^\bullet(E)$.

\begin{prop} For any $X\in\Sec(E)$, the contraction
$$\cD^p(E)\ni\ell\mapsto i_X\ell\in\cD^{p-1}(E)\,,\quad (i_X\ell)(X_1,\dots,X_{p-1})=\ell(X,X_1,\dots,X_{p-1})\,,$$
is a degree $-1$ graded derivation of the shuffle algebra $({\cal
D}^{\bullet}(E),\pitchfork)$.\end{prop}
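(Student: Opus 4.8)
The plan is to verify the graded Leibniz rule
\[
i_X(\ell'\pitchfork\ell'')=(i_X\ell')\pitchfork\ell''+(-1)^p\,\ell'\pitchfork(i_X\ell'')
\]
for $\ell'\in\cD^p(E)$ and $\ell''\in\cD^q(E)$ by a direct bijective analysis of the shuffles entering the definition of $\pitchfork$. That $i_X$ has degree $-1$, i.e. sends $\cD^p(E)$ into $\cD^{p-1}(E)$, is immediate: freezing the first slot of a multidifferential operator to be the fixed section $X$ leaves a multidifferential operator in the remaining $p-1$ arguments, and on $\cD^0(E)=C^\infty(M)$ the map $i_X$ is zero by the convention $\cD^{-1}(E)=0$ (which is also what the Leibniz rule forces). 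Since $(-1)^p=(-1)^{(-1)\cdot p}$, the displayed identity is the correct Koszul-signed form of the derivation property for a degree $-1$ operator, so it is all that needs to be proved.

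First I would evaluate both sides on sections $X_1,\dots,X_{p+q-1}\in\Sec(E)$. By definition, $i_X(\ell'\pitchfork\ell'')$ at this tuple is $(\ell'\pitchfork\ell'')$ evaluated at the $(p+q)$-tuple $(Y_1,\dots,Y_{p+q})$ with $Y_1=X$ and $Y_{i+1}=X_i$, hence a sum over $\zs\in\ope{sh}(p,q)$ of $\ope{sign}\zs$ times $\ell'(Y_{\zs_1},\dots,Y_{\zs_p})\,\ell''(Y_{\zs_{p+1}},\dots,Y_{\zs_{p+q}})$. Because both blocks of a shuffle are increasing, the index $1$ --- the slot carrying $X$ --- must occur either as $\zs_1$ (so $X$ is fed as the first argument of $\ell'$) or as $\zs_{p+1}$ (so $X$ is fed as the first argument of $\ell''$); this splits $\ope{sh}(p,q)$ into two parts, which I treat in turn, the degenerate cases $p=0$ or $q=0$ being absorbed automatically (then only one of the two alternatives can occur).

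On the first part, $\zs_1=1$ gives $Y_{\zs_1}=X$, so the term reads $(i_X\ell')(Y_{\zs_2},\dots,Y_{\zs_p})\cdot\ell''(Y_{\zs_{p+1}},\dots,Y_{\zs_{p+q}})$; deleting the value $1$ and relabelling $\{2,\dots,p+q\}$ order-preservingly onto $\{1,\dots,p+q-1\}$ identifies this part bijectively with $\ope{sh}(p-1,q)$, and since $1$ sits in the very first position no inversions are lost, so signs are preserved and this part sums exactly to $((i_X\ell')\pitchfork\ell'')(X_1,\dots,X_{p+q-1})$. On the second part, $\zs_{p+1}=1$ forces the factorization $\ell'(Y_{\zs_1},\dots,Y_{\zs_p})\cdot(i_X\ell'')(Y_{\zs_{p+2}},\dots,Y_{\zs_{p+q}})$, and the same deletion-and-relabelling identifies this part with $\ope{sh}(p,q-1)$. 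The one point that needs care is the sign: the minimal value $1$ now sits in position $p+1$, behind the $p$ strictly larger entries $\zs_1<\dots<\zs_p$ (and in front of the remaining, also larger, entries), so exactly $p$ inversions are removed when it is deleted, whence $\ope{sign}\zs=(-1)^p\ope{sign}\zt$ for the associated $\zt\in\ope{sh}(p,q-1)$; this part therefore contributes $(-1)^p\,(\ell'\pitchfork(i_X\ell''))(X_1,\dots,X_{p+q-1})$. Summing the two parts gives the claimed identity on arbitrary sections. The inversion-counting bookkeeping in the second part is the only delicate step; the rest is formal, and the associativity and graded commutativity of $\pitchfork$ from the preceding proposition ensure that this Leibniz rule is indeed all that must be checked.
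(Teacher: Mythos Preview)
Your proof is correct and follows essentially the same approach as the paper: both split the sum over $(p,q)$-shuffles according to whether $\zs_1=1$ or $\zs_{p+1}=1$, identify the two parts with shuffle sums defining $(i_X\ell')\pitchfork\ell''$ and $\ell'\pitchfork(i_X\ell'')$ respectively, and observe that the second identification costs a sign $(-1)^p$. Your inversion-counting argument for that sign is a bit more explicit than the paper's one-line remark, but the content is identical.
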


\begin{proof} Using usual notations, our definitions, as well as a separation of
the involved shuffles $\zs$ into the $\zs$-s that verify
${\zs_1}=1$ and those for which ${\zs_{p+1}=1}$, we get
$$\left(i_{X_1}(\ell'\pitchfork\ell'')\right)(X_2,\ldots,X_{p+q})= \sum_{\zs:\zs_1=1}\ope{sign}\zs\;(i_{X_1}\ell')(X_{\zs_2},
\ldots,X_{\zs_{p}}) \ell''(X_{\zs_{p+1}},\ldots,X_{\zs_{p+q}})$$
$$+\sum_{\zs:\zs_{p+1}=1}\ope{sign}\zs\;\ell'(X_{\zs_1},\ldots,X_{\zs_{p}})
(i_{X_1}\ell'')(X_{\zs_{p+2}},\ldots,X_{\zs_{p+q}}).$$ Whereas a
$(p,q)$-shuffle of the type $\zs_1=1$ is a $(p-1,q)$-shuffle with
same signature, a $(p,q)$-shuffle such that $\zs_{p+1}=1$ defines
a $(p,q-1)$-shuffle with signature $(-1)^p\ope{sign}\zs$.
Therefore, we finally get
$$i_{X_1}(\ell'\pitchfork\ell'')=(i_{X_1}\ell')\pitchfork
\ell''+(-1)^p\ell'\pitchfork(i_{X_1}\ell'').$$\end{proof}

Observe that the supercommutators
$[i_X,i_Y]_{\ope{sc}}=i_Xi_Y+i_Yi_X$ do not necessarily vanish, so
that the derivations $i_X$ of the shuffle algebra generate a Lie
superalgebra of derivations with negative degrees. Indeed,
$[i_X,i_Y]_{\ope{sc}}=:i_{X\Box Y}$,
$[[i_X,i_Y]_{\ope{sc}},i_Z]_{\ope{sc}}=:i_{(X\Box Y)\Box Z},...$
are derivations of degree $-2$, $-3,...$ given on any
$\ell\in{\cal D}^p(E)$ by
$$(i_{X\Box Y}\ell)(X_1,\dots,X_{p-2})=\ell(Y,X,X_1,\dots,X_{p-2})+\ell(X,Y,X_1,\dots,X_{p-2})\,,$$
$$(i_{(X\Box X)\Box
Y}\ell)(X_1,\dots,X_{p-3})=2\ell(Y,X,X,X_1,\dots,X_{p-3})-2\ell(X,X,Y,X_1,\dots,X_{p-3})\,,...$$\smallskip

The next proposition is obvious.

\begin{prop} The supercommutator $\cL_X:=[\pa,i_X]_{\ope{sc}}=\pa
i_X+i_X\pa$, $X\in\Sec(E)$, is a degree 0 graded derivation of the
shuffle algebra. Explicitly, for any $\ell\in\cD^p(E)$ and
$X_1,\ldots,X_p\in\Sec(E)$, \be\label{Ld}
(\cL_X\ell)(X_1,\dots,X_p)=\zr(X)\left(\ell(X_1,\dots,X_p)\right)-\sum_i\ell(X_1,\dots,\stackrel{(i)}
{\overbrace{[X,X_i]}},\dots,X_p)\,.\ee\end{prop}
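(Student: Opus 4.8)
The plan is to prove the two assertions separately: that $\cL_X$ is a degree~$0$ graded derivation, which is purely formal, and then the explicit formula (\ref{Ld}), which is a Cartan-calculus computation. For the first assertion I would invoke the standard fact that the graded derivations of any graded commutative associative algebra form a graded Lie algebra under the graded commutator, so that if $D_1$ is a graded derivation of degree $r_1$ and $D_2$ one of degree $r_2$, then $[D_1,D_2]_{\ope{sc}}=D_1D_2-(-1)^{r_1r_2}D_2D_1$ is a graded derivation of degree $r_1+r_2$. By Theorem \ref{GeoIntKLA1} the operator $\pa=\pa_B$ is a graded derivation of $(\cD^\bullet(E),\pitchfork)$ of degree $1$, and by the contraction proposition proved just above each $i_X$ is a graded derivation of degree $-1$; since both are odd, $\cL_X=[\pa,i_X]_{\ope{sc}}=\pa i_X+i_X\pa$ is a graded derivation of degree $0$, i.e.\ an ordinary derivation, and the degree count makes it clear that it maps $\cD^p(E)$ to $\cD^p(E)$.

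For the second assertion I would simply evaluate both summands of $\cL_X\ell$ on sections $X_1,\dots,X_p\in\Sec(E)$, using formula (\ref{LodCohOpRepr}) for $\pa_B$ (here $\zm^l=\zr$, $\zm^r=-\zr$, so that only the anchor and bracket terms appear) together with the defining identity $(i_X\ell)(Y_1,\dots,Y_{p-1})=\ell(X,Y_1,\dots,Y_{p-1})$. Writing out $(\pa i_X\ell)(X_1,\dots,X_p)$ and $(i_X\pa\ell)(X_1,\dots,X_p)=(\pa\ell)(X,X_1,\dots,X_p)$ — the latter after the reindexing $y_1=X$, $y_{k+1}=X_k$ — one finds that $(\pa\ell)(X,X_1,\dots,X_p)$ contributes, from its $k=1$ anchor term, exactly $\zr(X)\,\ell(X_1,\dots,X_p)$, and from its $k=1$ bracket terms (those with first bracket entry $X$), exactly $-\sum_i\ell(X_1,\dots,[X,X_i],\dots,X_p)$. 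Every remaining term of $i_X\pa\ell$ — the anchor terms in which $\zr(X_i)$ acts on $\ell(X,X_1,\dots\hat{\imath}\dots,X_p)$, and the bracket terms with both bracket entries among the $X_i$ — occurs also in $\pa i_X\ell$, but with the opposite sign, because deleting or inserting the leading slot $X$ shifts the governing power of $-1$ by one; hence these cancel in pairs. Collecting the survivors yields (\ref{Ld}).

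The only point requiring care — the ``main obstacle'', such as it is — is the sign-and-position bookkeeping: one must track how the index $k$ of a given term changes once the leading argument $X$ is removed, and where the bracket $[X_i,X_j]$ sits after $X$ has been inserted in front (which, since it does not alter the relative order of the other entries, only shifts the overall sign by one). Once the reindexing above is fixed and the signs $(-1)^{k+1}$ in (\ref{LodCohOpRepr}) are followed through, the cancellation is entirely mechanical and the identity holds argument by argument on all of $\cD^\bullet(E)$; the degenerate case $p=0$ is covered by the same formula, since then $i_X\ell=0$ and $\cL_Xf=(\pa f)(X)=\zr(X)(f)$ by (\ref{anchor-reconstruction}), matching (\ref{Ld}) with empty sum.
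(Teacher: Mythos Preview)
Your proposal is correct and is exactly the natural argument the paper has in mind; the paper itself simply declares the proposition ``obvious'' and gives no further proof. Both the derivation property via the graded Lie algebra of derivations and the explicit Cartan-type cancellation you outline are the standard route, and your sign bookkeeping checks out.
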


We refer to the derivation $\cL_X$ as the Loday algebroid {\it Lie
derivative along $X$}.\medskip

If we define the Lie derivative on the tensor algebra
$T_\R(E)=\bigoplus_{p=0}^\infty\Sec(E)^{\ot_\R p}$ in the obvious
way by
$$\cL_X(X_1\ot_\R\cdots\ot_\R X_p)=\sum_iX_1\ot_\R\dots\ot_\R\stackrel{(i)}
{\overbrace{[X,X_i]}}\ot_\R\dots\ot_\R X_p\,,$$ and if we use the
canonical pairing
$$\la\ell,X_1\ot_\R\dots\ot_\R X_p\ran=\ell(X_1,\dots,X_p)$$ between $\cD^\bullet(E)$ and
$T_\R(E)$, we get \be\label{Ld1} \cL_X\la\ell,X_1\ot_\R\dots\ot_\R
X_p\ran=\la\cL_X\ell,X_1\ot_\R\dots\ot_\R
X_p\ran+\la\ell,\cL_X(X_1\ot_\R\dots\ot_\R X_p)\ran\,.\ee

The following theorem is analogous to the results in the standard
case of a Lie algebroid $E=\sT M$ and operations on the Grassmann
algebra $\zW(M)\subset \cD^\bullet(\sT M)$ of differential forms.

\begin{thm} The graded derivations $\pa$, $i_X$, and $\cL_X$ on $\cD^\bullet(E)$ satisfy the following identities:
\begin{itemize}
\item[(a)] $2\pa^2=[\pa,\pa]_{\ope{sc}}=0$\;, \item[(b)]
$\cL_X=[\pa,i_X]_{\ope{sc}}=\pa i_X+i_X\pa$\;, \item[(c)]
$\pa\cL_X-\cL_X\pa=[\pa,\cL_X]_{\ope{sc}}=0$\;, \item[(d)]
$\cL_Xi_Y-i_Y\cL_X=[\cL_X,i_Y]_{\ope{sc}}=i_{[X,Y]}$\;, \item[(e)]
$\cL_X\cL_Y-\cL_Y\cL_X=[\cL_X,\cL_Y]_{\ope{sc}}=\cL_{[X,Y]}$ .
\end{itemize}
\end{thm}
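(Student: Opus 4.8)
The plan is to take (a) and (b) as essentially already in hand and to deduce (c), (d), (e) from them by working inside the graded Lie algebra of graded derivations of the graded commutative associative unital algebra $(\cD^\bullet(E),\pitchfork)$, using the two standard facts that the supercommutator of two graded derivations is again a graded derivation and that the graded Jacobi identity holds. Indeed, (a) merely records the square-zero property $\pa^2=0$ of the homological vector field $\pa$ attached to the Loday algebroid (Theorem \ref{GeoIntKLA2}), together with the elementary observation that $[\pa,\pa]_{\ope{sc}}=2\pa^2$ because $\pa$ has odd degree; and (b) is nothing but the definition $\cL_X=[\pa,i_X]_{\ope{sc}}$ and the explicit formula (\ref{Ld}) obtained in the preceding proposition. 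So $i_X$, $\cL_X$ and $\pa$ are graded derivations of $\cD^\bullet(E)$ of degrees $-1$, $0$ and $1$ respectively.

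For (c) I would apply the graded Jacobi identity to the triple $(\pa,\pa,i_X)$; since $\pa$ is odd this gives $2\,[\pa,[\pa,i_X]_{\ope{sc}}]_{\ope{sc}}=[[\pa,\pa]_{\ope{sc}},i_X]_{\ope{sc}}$, and the right-hand side is $[2\pa^2,i_X]_{\ope{sc}}=0$ by (a). Hence $[\pa,\cL_X]_{\ope{sc}}=[\pa,[\pa,i_X]_{\ope{sc}}]_{\ope{sc}}=0$, which is (c).

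For (d), both $[\cL_X,i_Y]_{\ope{sc}}=\cL_Xi_Y-i_Y\cL_X$ and $i_{[X,Y]}$ are operators lowering the degree by $1$ on $\cD^\bullet(E)$, so it suffices to evaluate them on an arbitrary $\ell\in\cD^p(E)$ and sections $X_1,\dots,X_{p-1}\in\Sec(E)$. Using $(i_Z\ell)(Z_1,\dots,Z_{p-1})=\ell(Z,Z_1,\dots,Z_{p-1})$ together with (\ref{Ld}) one gets $(\cL_Xi_Y\ell)(X_1,\dots,X_{p-1})=\zr(X)\big(\ell(Y,X_1,\dots,X_{p-1})\big)-\sum_i\ell(Y,X_1,\dots,[X,X_i],\dots,X_{p-1})$, whereas $(i_Y\cL_X\ell)(X_1,\dots,X_{p-1})=(\cL_X\ell)(Y,X_1,\dots,X_{p-1})$ produces the same two families of terms plus the extra term $-\ell([X,Y],X_1,\dots,X_{p-1})$ coming from differentiating the slot occupied by $Y$. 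Subtracting, everything cancels except $\ell([X,Y],X_1,\dots,X_{p-1})=(i_{[X,Y]}\ell)(X_1,\dots,X_{p-1})$, proving (d); note that the Jacobi identity for $[\cdot,\cdot]$ is not needed at this stage.

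Finally, (e) follows by assembling the previous items. Applying (b) to the section $[X,Y]\in\Sec(E)$ and then (d), $\cL_{[X,Y]}=[\pa,i_{[X,Y]}]_{\ope{sc}}=\big[\pa,[\cL_X,i_Y]_{\ope{sc}}\big]_{\ope{sc}}$; expanding the outer supercommutator by the graded Jacobi identity for the triple $(\pa,\cL_X,i_Y)$ and using (c) to discard $[\pa,\cL_X]_{\ope{sc}}$ and (b) to rewrite $[\pa,i_Y]_{\ope{sc}}=\cL_Y$ yields $\cL_{[X,Y]}=[\cL_X,\cL_Y]_{\ope{sc}}$. (Alternatively one checks (e) directly from (\ref{Ld}): after the obvious cancellations the difference $[\cL_X,\cL_Y]_{\ope{sc}}\ell-\cL_{[X,Y]}\ell$ reduces, in the anchor term, to $\big([\zr(X),\zr(Y)]_c-\zr([X,Y])\big)(\ell(\dots))$, which vanishes because $\zr$ is a morphism of the Loday bracket into the bracket of vector fields by Theorem \ref{LodAld}, and, in the bracket slots, to $\sum_i\ell(\dots,[X,[Y,X_i]]-[Y,[X,X_i]]+[[X,Y],X_i],\dots)$, which vanishes by the Jacobi identity.) The only place where a genuine computation occurs is the cancellation in (d) (and in the alternative direct proof of (e)); everything else is formal manipulation, so the single point demanding care is the sign bookkeeping in the graded Jacobi identity and the verification that each auxiliary statement invoked — $\pa^2=0$, the formula (\ref{Ld}), the derivation property of $i_X$, and the homomorphism property of $\zr$ — applies in the situation at hand.
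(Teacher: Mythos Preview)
Your proof is correct and follows essentially the same approach as the paper's own proof: (a) and (b) are taken as given, (c) is obtained from the graded Jacobi identity (the paper simply calls it ``obvious''), (d) is verified by the direct computation you wrote out, and (e) is deduced from (b), (c), (d) via the graded Jacobi identity. The only cosmetic difference is that the paper expands $[\cL_X,[\pa,i_Y]_{\ope{sc}}]_{\ope{sc}}$ rather than $[\pa,[\cL_X,i_Y]_{\ope{sc}}]_{\ope{sc}}$, which amounts to the same thing.
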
\smallskip

\begin{proof} The results (a), (b), and (c) are obvious. Identity (d) is immediately checked by direct computation. The last
equality is a consequence of (c), (d), and the Jacobi identity
applied to $[\cL_X,[\pa,i_Y]_{\ope{sc}}]_{\ope{sc}}$.\end{proof}

Note that we can easily calculate the Lie derivatives of negative
degrees, $\cL_{X\Box Y}:=[\pa,i_{X\Box Y}]_{\ope{sc}}$,
$\cL_{(X\Box Y)\Box Z}:=[\pa,i_{(X\Box Y)\Box Z}]_{\ope{sc}}$, ...
with the help of the graded Jacobi identity. \medskip

Observe finally that Item (d) of the preceding theorem actually
means that
$$i_{[X,Y]}=[\![i_X,i_Y]\!]_{\pa},$$ where the {\small RHS} is the restriction to interior products of the derived bracket on
$\ope{Der}(\cD^{\bullet}(E),\pitchfork\nolinebreak)$ defined by
the graded Lie bracket $[\cdot,\cdot]_{\ope{sc}}$ and the interior
Lie algebra derivation $[\pa,\cdot]_{\ope{sc}}$ of
$\ope{Der}(\cD^{\bullet}(E),\pitchfork)$ induced by the
homological vector field $\pa.$

\vskip1cm
\noindent Janusz GRABOWSKI\\ Polish Academy of Sciences\\ Institute of
Mathematics\\ \'Sniadeckich 8, P.O. Box 21, 00-956 Warsaw,
Poland\\Email: jagrab@impan.pl \\

\noindent David KHUDAVERDYAN\\University of
Luxembourg\\ Campus Kirchberg, Mathematics Research Unit\\ 6, rue Richard Coudenhove-Kalergi, L-1359 Luxembourg
City, Grand-Duchy of Luxembourg
\\Email: david.khudaverdyan@uni.lu \\

\noindent Norbert PONCIN\\University of Luxembourg\\
Campus Kirchberg, Mathematics Research Unit\\ 6, rue Richard Coudenhove-Kalergi, L-1359 Luxembourg
City, Grand-Duchy of Luxembourg\\Email: norbert.poncin@uni.lu

\end{document}